\numberwithin{equation}{section}
\newtheorem{theorem}{Theorem}[section]
\newtheorem{proposition}{Proposition}[section]
\newtheorem{lemma}{Lemma}[section]
\theoremstyle{definition}
\newtheorem{remark}{Remark}[section]
\newtheorem{definition}{Definition}[section]
\newtheorem{assumption}{Assumption}
\newcommand{\ie}{{i.e.}, }
\newcommand{\eg}{{e.g.}, }
\newcommand{\cf}{{cf.\ }}
\newcommand{\nn}{\mathbb{N}} 
\newcommand{\norm}[1]{\left\Vert {#1} \right\Vert} 
\newcommand{\erl}{\left(-\infty , +\infty\right]} 
\newcommand{\dom}[1]{\mathrm{dom}\,{#1}} 
\newcommand{\idom}[1]{\mathrm{int\,dom}\,{#1}} 
\newcommand{\cdom}[1]{\overline{\mathrm{dom}\,{#1}}} 
\newcommand{\gr}[1]{\mathrm{graph}\,{#1}} 
\newcommand{\crit}[1]{\mathrm{crit}\,{#1}}
\newcommand{\dist}{\mathrm{dist}} 
\newcommand{\act}[1]{\left\langle {#1} \right\rangle} 
\newcommand{\seq}[2]{\{{#1}_{{#2}}\}_{{#2} \in \mathbb{N}}}
\newcommand{\Seq}[2]{\{{#1}^{{#2}}\}_{{#2} \in \mathbb{N}}}
\newcommand{\limit}[2]{\lim_{{#1} \rightarrow {#2}}}
\newcommand{\argmin}{\mathrm{argmin}}
\newcommand{\sgn}{\mathrm{sgn}}
\newcommand{\barc}{\overline{C}}
\newcommand{\AAA}{\mathcal{A}}
\newcommand{\HHH}{\mathcal{H}}
\newcommand{\PPP}{\mathcal{P}}
\newcommand{\SSS}{\mathcal{S}}
\newcommand{\real}{\mathbb{R}} 
\newcommand{\rr}{\mathbb{R}} 
\title{First Order Methods beyond Convexity and Lipschitz Gradient Continuity with Applications to Quadratic Inverse Problems}
\author{J\'{e}r\^{o}me Bolte\footnote{TSE  (Universit\'{e} Toulouse I), Manufacture des Tabacs, 21 all\'{e}e de Brienne, 31015 Toulouse, France. E-mail: jerome.bolte@tse-fr.eu.} \and Shoham Sabach\footnote{Faculty of Industrial Engineering, The Technion, Haifa, 32000, Israel. E-mail: ssabach@ie.technion.ac.il.}\and Marc Teboulle\footnote{School of Mathematical Sciences, Tel-Aviv University, Ramat-Aviv 69978, Israel. E-mail: teboulle@post.tau.ac.il. This research was partially supported by the Israel Science Foundation, under ISF Grants  998-12 and 1844-16} \and Yakov Vaisbourd\footnote{School of Mathematical Sciences, Tel-Aviv University, Ramat-Aviv 69978, Israel. E-mail: yakov.vaisbourd@gmail.com. This research is supported by a postdoctoral fellowship under ISF grants 998-12 and 1844-16}}
\date{\today}
\begin{document}
\maketitle

	\begin{abstract}
		We focus on nonconvex and nonsmooth minimization problems with a composite objective, where the differentiable part of the objective is {\em freed} from the usual and restrictive global Lipschitz gradient continuity assumption. This longstanding smoothness restriction is pervasive in  first order methods (FOM), and was recently circumvent for  {\em convex composite} optimization by Bauschke, Bolte and Teboulle,  through a simple and elegant framework which captures, all at once, the geometry of the function and of the feasible set. Building on this work, we tackle genuine {\em nonconvex} problems. We first complement and extend their approach to derive a full extended descent lemma by introducing the notion of {\em  smooth adaptable functions}. We then consider a Bregman-based proximal gradient methods for the nonconvex composite model with smooth adaptable functions, which is proven to globally converge to a critical point under natural assumptions on the problem's data. To illustrate the power and potential of our general framework and results, we consider a broad class of {\em quadratic inverse} problems with sparsity constraints which arises in many fundamental applications, and we apply our approach to derive new globally convergent schemes for this class.
	\end{abstract}

	\noindent {\bfseries 2010 Mathematics Subject Classification:} Primary 90C25; Secondary 26B25, 49M27, 52A41, 65K05.

	\noindent {\bfseries Keywords:} Composite nonconvex nonsmooth minimization, proximal-gradient algorithms, descent lemma, Non Euclidean distances, Bregman distance, global convergence, Kudyka-{\L}osiajewicz proeprty, semi-algebraic functions, quadratic inverse problems, phase retrieval.

\section{Introduction} \label{Sec:Intro}
	The gradient method, forged by Cauchy about 170 years ago, is still at the heart of many fundamental schemes in modern computational optimization through many variants and relatives known as First Order Methods (FOM). These are currently the leading algorithms for solving large scale problems to medium accuracy. The focus and intensity of today research is mostly in the {\em convex} case: it goes from complexity to decomposition methods, from Lagrangian approaches to stochastic variants. For an appetizer, see for instance \cite{cw05, palo-elda-10, SNW11} and references therein, as well as the recent comprehensive text of Bertsekas \cite{bert15} with many relevant up-to-date and annotated sources and references.
\medskip

	A crucial and standard assumption common to almost all FOM is that the gradient of the smooth part in a given objective function has to be globally Lipschitz continuous on the entire underlying space. This is a very restrictive assumption which has only been circumvented by line-search approaches and/or quite complex inner loops which are unavoidably distorting the efficiency and the complexity of the initial method.
\medskip

	Recently, Bauschke, Bolte and Teboulle (BBT) \cite{BBT2016} solved this longstanding issue and dealt with non globally Lipschitz continuous gradient. Their  framework is simple and far reaching, it is based on adapting the geometry to the objective through the Bregman distance paradigm. It allows them to derive a new   Descent Lemma, whereby the usual upper quadratic approximation of a smooth function is replaced by a more general proximity measure which captures, all at once, the geometry of the function and the one of the feasible set. The corresponding FOM derived in \cite{BBT2016} come with guaranteed complexity estimates and pointwise global convergence results for {\em convex composite} minimization. Very recently, the publication \cite{quang} developed independently similar ideas for analyzing the convergence of a Bregman proximal gradient applied to the convex composite model in Banach spaces.
\medskip

	This work is a major departure from the current convex composite model and hence with far reaching consequences. Our main goal is to extend the BBT framework to the nonconvex setting. We complement and extend their approach to study {\em nonconvex composite} minimization problems. These consist in minimizing the sum of {\em two nonconvex} functions: an extended valued function $f$ and a continuously differentiable function $g$
	\begin{equation*}
		(\PPP) \qquad \inf \left\{ f\left(x\right) + g\left(x\right) : \; x \in \barc \right\},
	\end{equation*}
	where $\barc$ denotes the closure of $C$ which is a nonempty, convex and open set in $\real^{d}$ (see Section \ref{Sec:Prob} for a more precise statement).
\medskip

	Thus, here not only the functions $f$ and $g$ are not convex, but we also consider problems where $g$ {\em does not} admit a globally Lipschitz continuous gradient. The usual restrictive requirement of Lipschitz continuity of the gradient of $g$ in problem $(\PPP)$ is not needed, and replaced by a convexity condition which adapts to the geometry of the function $g$. To solve problem $(\PPP)$, we focus on a proximal-based gradient method which involves the non Euclidean distances of Bregman type, and which covers standard proximal-based gradient methods. We derive a class of proximal-based gradient algorithms which are proven to globally converge to a critical point of $(\PPP)$ when $C = \real^{d}$ and when the data is semi-algebraic. Note that one of the earliest work describing and analyzing the {\em classical} proximal gradient method in the nonconvex setting goes back to the work of Fukushima and Milne \cite{fuku81}. See also \cite{ABS, BST2014} for a full convergence analysis in the semi-algebraic setting, but which also imposed the usual restrictive global Lipschitz continuity of the gradient of $g$. These results are significantly improved in the present paper within the non Euclidean proximal framework.
\medskip

	To illustrate the power of our general framework and results, we consider a broad class of {\em quadratic inverse problems} with sparsity constraints which arise in many fundamental applications (see, for instance, \cite{BY12} and reference therein). We apply our approach to derive new and simple provably convergent schemes, which to the best of our knowledge are the first globally convergent algorithms for this important class of problems.
\medskip

   {\bf Outline of the paper.} The paper is organized as follows. We first complement and extend the BBT approach to derive a full Descent Lemma by introducing the notion of {\em smooth adaptable functions}, see Section \ref{Sec:Smo}. The following section presents the mathematical tools necessary to handle the nonconvex setting. It describes the problem and the corresponding proximal- based gradient method for the nonconvex composite model, freed from the usual global Lipschitz gradient continuity restriction. The  analysis is developed in Section \ref{Sec:Analysis} where the resulting scheme is proven to globally converges to a critical point under natural assumptions on the problem's data when $C = \real^{d}$. We demonstrate the potential of our framework by showing in Section \ref{Sec:Appl} how it can be successfully applied to a broad class of quadratic inverse problems with sparsity constraints, resulting in computationally simple and explicit iterative formulas. To make the paper self contained, we provide an appendix (see Appendix \ref{A:GlobConv}) which includes the relevant material and results for the convergence analysis of algorithm in the semi-algebraic setting.
\medskip

{\bf Notation.} We use standard notation and concepts which, unless otherwise specified can all be found in \cite{RW98}.

\section{Smooth Adaptable Functions and a Descent Lemma} \label{Sec:Smo}
	We begin by defining the notion of {\em smooth adaptable} functions. This notion is motivated by the recent work \cite{BBT2016} in which a {\em Lipschitz-like/Convexity condition} was introduced. This condition allows to lift the usual smoothness of the gradient of a given convex function, and derive  a {\em one sided} Descent Lemma, whereby the usual upper quadratic approximation of $C^{1}$ functions is given in terms of a more general proximity measure. Here, we extend and complement this notion by accommodating differentiable functions which are not necessarily convex, and we also derive a natural {\em two sided} Descent Lemma.

\subsection{Preliminaries on Proximity Measures}
   We first introduce our setting with some notations and definitions that will be used throughout the paper.
\medskip

	\begin{definition}(Kernel Generating Distance) \label{def:kgd}
    		Let $C$ be a nonempty, convex and open subset of $\real^{d}$. Associated with $C$, a function $h : \real^{d} \rightarrow \erl$ is called a kernel generating distance if it satisfies the following:
    		\begin{itemize}
    			\item[$\rm{(i)}$] $h$ is proper, lower semicontinuous and convex, with $\dom h \subset \barc$ and $\dom \partial h = C$.
     		\item[$\rm{(ii)}$] $h$ is $C^{1}$ on $\idom h \equiv C$.
 		\end{itemize}
		We denote the class of kernel generating distances by $\mathcal{G}(C)$.
 	\end{definition}
\medskip
	
	Given $h \in \mathcal{G}(C)$, define the proximity measure $D_{h} : \dom h \times \idom h \to \real_{+}$
    \begin{equation*}
   		D_{h}\left(x , y\right) := h\left(x\right) - \left[h\left(y\right) + \act{\nabla h\left(y\right) , x - y}\right].
   	\end{equation*}
	The proximity measure $D_{h}$ is the so-called {\em Bregman Distance} \cite{Bregman}. It measures the proximity between $x$ and $y$. Indeed, thanks to the gradient inequality, one has
	\begin{equation*}
		h \; \text{is convex if and only if}\;  D_{h}(x,y)\geq 0, \forall x \in \dom h,  y \in \idom h.
	\end{equation*}
 	If in addition $h$ is strictly convex, equality holds if and only if $x = y$. However, note that $D_{h}$ is not symmetric in general, unless $h$ is the energy kernel $h = \left(1/2\right)\norm{\cdot}^{2}$, which yields the classical squared Euclidean distance. For early foundation papers and key results on Bregman distances, associated proximal-based algorithms, as well as many examples of kernels $h$, generating Bregman distances, we refer the reader to \cite{CZ1992, tebo92, ChenTeb, Eck93, BB}.
\medskip

	Note that the structural form of $D_{h}$ is also useful when $h$ {\em is not} convex. It measures the discrepancy or error, between the value of $h$ at a given point $x \in \dom h$ from its linear approximation around $y \in \idom h$. In that case, obviously, the distance like property $D_{h}\left(x , y\right) \geq 0$ and equal to zero if and only if $x = y$, is no longer valid. However, $D_{h}$ still enjoys two simple, but remarkable properties, which follows from elementary algebra:
	\begin{itemize}
		\item {\bf The three point identity} \cite[Lemma 3.1]{ChenTeb} For any $ y , z \in \idom h$ and $x \in \dom h$,
			\begin{equation*}
				D_{h}\left(x , z\right) - D_{h}\left(x , y\right) - D_{h}\left(y , z\right) =
\langle \nabla h\left(y\right) - \nabla h\left(z\right), x-y \rangle.
			\end{equation*}
		\item {\bf Linear Additivity} For any $\alpha, \beta \in \real$, and any functions $h_{1}$ and $h_{2}$
		we have:
   			\begin{equation*}
   				D_{\alpha h_{1} + \beta h_{2}}\left(x , y\right) = \alpha D_{h_{1}}\left(x , y\right) + \beta D_{h_{2}}\left(x , y\right),
   			\end{equation*}
			for all couple $(x,y)\in \left(\dom h_1\cap \dom h_2\right )^2$ such that both $h_1$ and $h_2$ are differentiable at $y$.	
	\end{itemize}
	
\subsection{Smooth Adaptivity and an Extended Descent Lemma} \label{SSec:Smod}
	Throughout the paper we will work with a pair of functions $(g , h)$ satisfying:
	\begin{itemize}
		\item[$\rm{(i)}$] $h \in \mathcal{G}(C)$.
		\item[$\rm{(ii)}$] $g : \real^{d} \rightarrow \erl$ is a proper and lower semicontinuous function with $\dom h \subset \dom g$, which is continuously differentiable on $C = \idom h$
	\end{itemize}
	\begin{definition}[L-smooth adaptable] \label{def:smad}
		A pair $(g , h)$ is called $L$-smooth adaptable on $C$ if there exists $L > 0$ such that $Lh - g$ and $Lh + g$ are convex on $C$.
	\end{definition}
	Note that the above definition holds for any convex function $h$ which is $C^{1}$ on any open subset of $\dom h$. The additional properties required in the class $\mathcal{G}(C)$ are not necessary. Only the convexity of $Lh \pm g$ plays a central role. However, for the sake of consistency with the algorithmic development and results that follow, we will always use $h \in \mathcal{G}(C)$.
\medskip

	The above definition allows for immediately obtaining the promised {\em two-sided} Descent Lemma which naturally complements and extends the one derived in \cite[Lemma 1, p. 333]{BBT2016}.
	\begin{lemma}[Full Extended Descent Lemma] \label{L:NoLipsDescent}
		The pair of functions $(g , h)$ is $L$-smooth adaptable on $C$ if and only if:
		\begin{equation} \label{L:NoLipsDecent:1}
			\left| g\left(x\right) - g\left(y\right) - \act{\nabla g\left(y\right) , x - y} \right| \leq LD_{h}\left(x , y\right), \quad \forall \,\, x , y \in \idom h.
		\end{equation}
	\end{lemma}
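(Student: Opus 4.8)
The plan is to reduce the whole statement to the classical gradient-inequality characterization of convexity for differentiable functions on the open convex set $C$. Recall that a function $\phi$ which is $C^{1}$ on the convex open set $C$ is convex on $C$ if and only if its first-order Taylor expansion underestimates it everywhere, that is, $\phi(x) - \phi(y) - \langle \nabla \phi(y), x-y \rangle \geq 0$ for all $x , y \in C$. I would apply this criterion separately to the two functions $\phi_{+} := Lh + g$ and $\phi_{-} := Lh - g$, since by Definition \ref{def:smad} the pair $(g,h)$ is $L$-smooth adaptable precisely when both $\phi_{+}$ and $\phi_{-}$ are convex on $C$. Both functions are $C^{1}$ and finite-valued on $C = \idom h$, so the criterion applies.

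First I would compute the gradient-inequality defect of each $\phi_{\pm}$ and recognize the Bregman distance in it. Using linearity of the gradient together with the definition of $D_{h}$, for any $x , y \in \idom h$ one obtains the single identity
\[
\phi_{\pm}(x) - \phi_{\pm}(y) - \langle \nabla \phi_{\pm}(y), x-y\rangle
= L\,D_{h}(x,y) \pm \bigl(g(x) - g(y) - \langle \nabla g(y), x-y \rangle\bigr).
\]
This elementary computation is the heart of the argument; everything that follows is interpretation of it.

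Next I would read off the two one-sided inequalities. By the convexity criterion, $\phi_{-}$ is convex on $C$ if and only if $g(x) - g(y) - \langle \nabla g(y), x-y\rangle \leq L\,D_{h}(x,y)$, while $\phi_{+}$ is convex on $C$ if and only if $g(x) - g(y) - \langle \nabla g(y), x-y\rangle \geq -L\,D_{h}(x,y)$, in both cases for all $x , y \in \idom h$. Since $L$-smooth adaptability means that $\phi_{+}$ and $\phi_{-}$ are \emph{simultaneously} convex, the two bounds hold together, and combining them produces exactly the absolute-value estimate \eqref{L:NoLipsDecent:1}. Conversely, the absolute-value bound splits back into these two one-sided inequalities, each of which, via the same identity, returns the convexity of the corresponding $\phi_{\pm}$; this yields the reverse implication and hence the desired equivalence.

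I do not anticipate any genuine obstacle here, as the result is in essence a reformulation of the definition through the gradient inequality. The only points demanding a little care are to apply the convexity characterization on the open convex set $C$, where both $h$ and $g$ are differentiable and finite (which is precisely why the statement is restricted to $x , y \in \idom h$), and to track the signs so that the two one-sided estimates assemble correctly into the symmetric bound.
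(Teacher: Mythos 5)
Your proof is correct and follows essentially the same route as the paper: the paper also characterizes convexity of $Lh \pm g$ via nonnegativity of the Bregman ``defect'' $D_{Lh \pm g}$ and then invokes linear additivity, $D_{Lh \pm g} = L D_{h} \pm D_{g}$, which is precisely the identity you compute explicitly. The only cosmetic difference is that the paper cites this additivity as a previously stated property, whereas you derive it inline from the gradient inequality.
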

	\begin{proof}
		By using Definition \ref{def:smad}, the pair $(g , h)$ is $L$-smooth adaptable on $C$ if and only if $Lh - g$ and $Lh +g$ are convex on $\idom h$. Therefore, thanks to the definition of the Bregman distance and its linear additivity property, this holds if and only if $LD_{h}\left(x , y\right) -D_{g}\left(x , y\right) = D_{Lh - g}\left(x , y\right) \geq 0$, and likewise for $D_{Lh +g}\left(x , y\right) \geq 0$, from which  the result  immediately follows.
	\end{proof}
	Clearly, in the setting of \cite{BBT2016}, that is, when the function $g$ is also assumed to be convex, the condition  $Lh + g$ is convex, trivially holds, and hence redundant. In this case, the Nolips Descent Lemma given in \cite[Lemma 1, p .333]{BBT2016} is recovered, \ie $D_{g}\left(x , y\right) \leq L D_{h}\left(x , y\right)$, though here $g$ needs not be convex.
\medskip

	Using the structural definition of $D_{g}$ (even though $D_{g}$ is not necessarily a proximity measure \`a la Bregman, since here $g$ is not convex) the full Descent Lemma reads compactly:
	\begin{equation*}
  		\left| D_{g}\left(x , y\right) \right| \leq LD_{h}\left(x , y\right), \quad  \forall \,\, x , y \in \idom h.
	\end{equation*}
	In the special case when the set $C$ is the whole space $\real^{d}$, and $h = \left(1/2\right)\norm{\cdot}^{2}$, the classical Descent Lemma for a function $g$ with an $L$-smooth gradient on $\real^{d}$, which provides lower and upper {\em quadratic} approximation for $g$, is recovered, \ie
	\begin{equation*}
		\left|D_{g}\left(x , y\right)\right| \equiv \left|g\left(x\right) - g\left(y\right) - \act{\nabla g\left(y\right) , x - y}\right| \leq \frac{L}{2}\norm{x - y}^{2}, \quad \forall \,\, x , y \in \real^{d}.
	\end{equation*}
	The new Descent Lemma allows for more general lower and upper approximations for $g$ by exploiting the geometry of the set $C$ through the use of the kernel function $h \in \mathcal{G}(C)$ and its associated Bregman proximity measure $D_{h}$.
\medskip

	\begin{remark}
  		The convexity requirement of $Lh + g$ (and therefore also the left-hand side inequality in \eqref{L:NoLipsDecent:1}) can be written with respect to a different parameter $\ell \leq L$. For simplicity, we have used here the same parameter $L$ which appears in the condition $Lh - g$.
	\end{remark}
	Similarly to \cite[Proposition 1, p. 334]{BBT2016}, it is easy to see that the convexity condition on $Lh - g$ (and likewise on $Lh +g$), admit various alternative reformulations which can facilitate its checking. In particular, when both $g$ and $h$ are  twice continuously differentiable on $C$, the usual Hessian test can be used, see the applications section for interesting cases.
\medskip

	For the purposes of this paper, and throughout the rest (unless otherwise specified) it will be enough to consider  only the condition $Lh -g$ convex, and its corresponding one-sided Descent Lemma. Thus, for convenience we adopt the following short hand terminology for such a pair $(g , h)$:

	\begin{center}
		\fcolorbox{black}{white}{\parbox{16cm}{\vspace{-0.1in}
		\center{\textbf{L-smad} holds on $C$ (SMooth ADaptable) if and only if}
			\begin{equation*}
				\exists \; L > 0\; \text{such that}\;  Lh - g \; \text{is convex on} \; \idom h \; \Longleftrightarrow \; D_{g}\left(x , y\right) \leq LD_{h}\left(x , y\right), \; \forall \; x , y \in \idom h. \vspace{-0.1in}
			\end{equation*}}}
	\end{center}	
	We end this section by observing that the \textbf{L-smad} property is {\em invariant} when $h$ is assumed $\sigma$-strongly convex, where $\sigma > 0$ stands for the strong convexity modulus, namely $h -\left(\sigma/2\right)\norm{x}^{2}$ is convex. Indeed,  since no convexity is needed/assumed for $g$, we obviously have with $\omega\left(\cdot\right) := \left(1/2\right)\norm{\cdot}^{2}$:
		\begin{equation*}
			Lh - g = L\left(h - \omega\right) - \left(g - L\omega\right) := L {\bar h} - {\bar g},
		\end{equation*}
		namely, \textbf{L-smad} holds on $C$ for the new pair $\left( {\bar g} , {\bar h}\right)$.

\section{The Problem and a Bregman Proximal Gradient Algorithm} \label{Sec:Prob}
	Our aim is to solve the nonconvex problem $(\PPP)$ with $C := \idom{h}$, that is,
	\begin{equation*}
		\inf \left\{ \Psi\left(x\right) \equiv f\left(x\right) + g\left(x\right) : \; x \in\barc \right\},
	\end{equation*}
	where the following assumptions on the problem's data are made throughout the paper.
	\begin{assumption} \label{A:AssumptionA}
		\begin{itemize}
    			\item[$\rm{(i)}$] $h \in \mathcal{G}(C)$ with $\barc = \cdom h$.
     		\item[$\rm{(ii)}$] $f : \real^{d} \rightarrow \erl$ is a proper and lower semicontinuous function with $\dom f \cap C \neq \emptyset$.
    			\item[$\rm{(iii)}$] $g : \real^{d} \rightarrow \erl$ is a proper and lower semicontinuous function with $\dom{h} \subset \dom{g}$, which is $C^1$ on $C$.
        		\item[$\rm{(iv)}$] $v(\PPP) := \inf \left\{ \Psi\left(x\right) : \; x \in \barc \right\} > -\infty$.
    		\end{itemize}	
	\end{assumption}

\subsection{A Bregman Proximal Gradient Algorithm} \label{sec:pgno}
	To present the algorithm for tackling problem $(\PPP)$, we define for all $x \in \idom h$ and any step-size $\lambda > 0$, the {\em Bregman proximal gradient} mapping
	\begin{align}
		T_{\lambda}\left(x\right) & := \argmin \left\{ f\left(u\right) + \act{\nabla g\left(x\right) , u - x} + \frac{1}{\lambda} D_{h}\left(u , x\right) : \, u \in \barc \right\} \nonumber \\
		& = \argmin \left\{ f\left(u\right) + \act{\nabla g\left(x\right) , u - x} + \frac{1}{\lambda} D_{h}\left(u , x\right) : \, u \in \real^{d} \right\}, \label{D:OperT}		 	
	\end{align}
	where the second inequality follows from the fact that $\dom h \subset \barc$. Note that, since $f$ is {\em nonconvex}, the mapping $T_{\lambda}$ is {\em not} in general single-valued. This map emerges from the usual approach which consists of linearizing the differential part $g$ around $x$, and regularize it with a proximal distance from that point. Clearly, with  $h \equiv \left(1/2\right)\norm{\cdot}^{2}$, the above boils down to the classical set-valued proximal gradient mapping.
\medskip

	We now briefly discuss the properties of the Bregman proximal mapping $T_{\lambda}$  in the nonconvex setting. For that purpose, we make the following two additional assumptions.
	\begin{assumption} \label{A:AssumptionB}
		The function $h + \lambda f$ is supercoercive for all $\lambda > 0$, that is,
		\begin{equation*}
			\lim_{\norm{u} \rightarrow \infty} \frac{h\left(u\right) + \lambda f\left(u\right)}{\norm{u}} = \infty.
		\end{equation*}		
	\end{assumption}
	\begin{assumption} \label{A:AssumptionC}
		For all $x \in C$ we have
		\begin{equation*}
			T_\lambda (x)\subset C, \: \forall x \in C.		
		\end{equation*}		
	\end{assumption}
	Assumption \ref{A:AssumptionB} is a quite standard coercivity condition, it is for instance automatically satisfied when $\barc$ is compact. On the other hand Assumption \ref{A:AssumptionC} can be shown to hold under a classical qualification condition which ensures the use of a partial sum rule \cite[Corollary 10.9, p. 430]{RW98} and which is formulated through the use of horizon subgradients
	\begin{equation}
		\partial^{\infty} f\left(x\right) \cap \left(-\partial^{\infty} h\left(x\right)\right) = \left\{ 0 \right\}, \quad \forall \,\, x \in \real^{d}.
	\end{equation}
	It also holds automatically when $f$ is convex or when $C = \real^{d}$. Another approach to warrant Assumption \ref{A:AssumptionC} is to consider extending the notion of prox-boundedness as defined in \cite[Chapter 1, p. 19-20]{RW98} to Bregman proximal-based map and their envelopes. However, to keep our presentation simple and transparent, these technical issues will not be pursued here, since they are irrelevant in the context of this paper.
\medskip

	We have the following basic result.
	\begin{lemma}[Well-Posedness of $T_{\lambda}$] \label{P:WellProximal}
		Suppose that Assumptions \ref{A:AssumptionA}, \ref{A:AssumptionB} and \ref{A:AssumptionC} hold, and let $x \in \idom h$. Then, the set $T_{\lambda}\left(x\right)$ is a nonempty and compact subset of $C$.
	\end{lemma}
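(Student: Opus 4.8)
The plan is to apply the direct method of the calculus of variations to the objective
\begin{equation*}
\Phi(u) := f(u) + \act{\nabla g(x) , u - x} + \frac{1}{\lambda} D_{h}(u , x),
\end{equation*}
whose set of minimizers is exactly $T_{\lambda}(x)$. I would verify in turn that $\Phi$ is proper, lower semicontinuous, and supercoercive, then read off nonemptiness and compactness of its $\argmin$; the inclusion in $C$ is handed to us directly by Assumption \ref{A:AssumptionC}.

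First, properness and lower semicontinuity. By Assumption \ref{A:AssumptionA}(ii) pick $u_{0} \in \dom f \cap C$; since $C = \idom h \subset \dom h$, both $f(u_{0})$ and $h(u_{0})$ are finite, and the affine terms are finite as well, so $\Phi(u_{0}) < +\infty$, i.e. $\Phi \not\equiv +\infty$. For lower semicontinuity, $f$ is lsc by Assumption \ref{A:AssumptionA}(ii), $h$ is lsc by Definition \ref{def:kgd}(i) (being $h \in \mathcal{G}(C)$), and the remaining terms $\act{\nabla g(x) , u - x}$ and $-\tfrac{1}{\lambda}\act{\nabla h(x) , u - x}$ are affine in $u$, hence continuous. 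Thus $\Phi$ is proper and lsc.

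The crux is coercivity. Expanding the Bregman distance and multiplying by $\lambda > 0$,
\begin{equation*}
\lambda \Phi(u) = \bigl[\, h(u) + \lambda f(u) \,\bigr] + \act{\lambda \nabla g(x) - \nabla h(x) , u} + c,
\end{equation*}
where $c$ collects the terms constant in $u$. Dividing by $\norm{u}$ and letting $\norm{u} \to \infty$, the bracketed term tends to $+\infty$ by the supercoercivity hypothesis of Assumption \ref{A:AssumptionB}, the middle term stays bounded by $\norm{\lambda \nabla g(x) - \nabla h(x)}$ via Cauchy--Schwarz, and $c / \norm{u} \to 0$. Hence $\lambda \Phi(u) / \norm{u} \to +\infty$, so $\Phi$ is supercoercive and in particular coercive; its sublevel sets are therefore bounded.

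Finally, since $\Phi$ is proper, lsc, and has bounded sublevel sets, the Weierstrass theorem guarantees that $\Phi$ attains its infimum, so $T_{\lambda}(x) = \argmin \Phi$ is nonempty. It is closed, being a sublevel set of the lsc function $\Phi$ at its minimal value, and bounded by coercivity, hence compact. The inclusion $T_{\lambda}(x) \subset C$ is precisely Assumption \ref{A:AssumptionC}. I expect the only delicate point to be the coercivity step, specifically confirming that the affine perturbation $\act{\lambda \nabla g(x) - \nabla h(x) , u}$ cannot destroy the growth of $h + \lambda f$; this is exactly why the strengthened supercoercive (rather than merely coercive) form of Assumption \ref{A:AssumptionB} is imposed.
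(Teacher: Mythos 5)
Your proof is correct and follows essentially the same route as the paper's: rewrite the objective as $\lambda f(u) + h(u) + \act{\gamma , u} + \text{const}$ with $\gamma = \lambda\nabla g(x) - \nabla h(x)$, use Cauchy--Schwarz and the supercoercivity of $h + \lambda f$ (Assumption \ref{A:AssumptionB}) to get level-boundedness, then invoke Weierstrass for a nonempty compact $\argmin$, with Assumption \ref{A:AssumptionC} supplying the inclusion in $C$. You are, if anything, slightly more explicit than the paper on properness and on why the $\argmin$ is closed, but the substance is identical.
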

	\begin{proof}
		Fix any $x \in \idom h$, and $\lambda >0$. For any $u \in \real^{d}$, we define the function
		 \begin{equation} \label{D:Psih}
		 	\Psi_{h}\left(u\right) := \lambda f\left(u\right) + \lambda \act{\nabla g\left(x\right) , u - x} + D_{h}\left(u , x\right),
		 \end{equation}
		 so that $T_\lambda\left(x\right) = \argmin_{u \in \real^{d}} \Psi_{h}\left(u\right)$. Using the definition of the Bregman distance, note that $\Psi_h$ can be rewritten as:
\begin{equation*}
		 	\Psi_{h}\left(u\right) = \lambda f(u) + h(u) + \act{\gamma , u} + \rho,
		\end{equation*}
		where $\gamma := \lambda \nabla g\left(x\right) - \nabla h\left(x\right) \in \real^{d}$ and $\rho := - h\left(x\right) - \act{\gamma , x} \in \real$ are constant quantities. We now show that $\Psi_{h}$ is level bounded on $\real^{d}$, \ie $\lim_{\norm{u} \rightarrow \infty} \Psi_{h}\left(u\right) = \infty$. Using the Cauchy-Schwarz inequality in the above definition of $\Psi_{h}$ we obtain
		 \begin{equation*}
		 	\Psi_{h}\left(u\right) \geq \lambda f\left(u\right) + h\left(u\right) - \norm{\gamma}\cdot\norm{u} - \left|\rho\right| = \norm{u}\left(\frac{\lambda f\left(u\right) + h\left(u\right)}{\norm{u}} - \norm{\gamma} - \frac{\left|\rho\right|}{\norm{u}}\right).
		 \end{equation*}
		Passing to the limit as $\norm{u} \rightarrow \infty$,  the supercoercivity of $h + \lambda f$ gives  $\lim_{\norm{u} \rightarrow \infty} \Psi_{h}\left(u\right) = \infty$. Therefore, since $\Psi_{h}$ is also proper and lower semicontinuous, invoking the modern form of Weierstrass' theorem (see, \eg \cite[Theorem 1.9, p. 11]{RW98}), it follows that the value $\inf_{\real^{d}} \Psi_{h}$ is finite, and the set $\argmin_{u \in \real^{d}} \Psi_{h}\left(u\right) \equiv T_{\lambda}\left(x\right)$ is nonempty and compact.
	\end{proof}	
	\begin{remark}[The case when $f$ is convex] \label{r:convexf}
		First note that in this case problem ($\PPP$) remains a {\em nonconvex} composite minimization which consists in minimizing the sum of a nonsmooth and convex function $f$ with a nonconvex and continuously differentiable function $g$. However, in this case, under our Assumption \ref{A:AssumptionA}, the function $\Psi_{h}$ which is proper and lower semicontinuous is now convex. It can then be shown (see \cite[Lemma 2, p. 336]{BBT2016}), that in this case the mapping $T_{\lambda}$ is also single-valued from $\idom h$ to $\idom h$.
 	\end{remark}
	We are now ready to describe our algorithm for solving the nonconvex composite problem $(\PPP)$.
\medskip

    \fcolorbox{black}{Ivory2}{\parbox{15cm}{{\bf Bregman Proximal Gradient - BPG} \\
    		{\bf Input.} A  function $h \in \mathcal{G}(C)$ with $C = \idom h$ such that \textbf{L-smad} holds on $C$. \\
    		{\bf Initialization.} $x^{0} \in \idom h$ and let $\lambda > 0$. \\
    		{\bf General Step.} For $k = 1 , 2 , \ldots$, compute
        		\begin{equation} \label{NoLipsPG}
            		x^{k}\in \argmin \left\{  f\left(x\right) + \act{x - x^{k - 1} , \nabla g\left(x^{k - 1}\right)} + \frac{1}{\lambda}D_{h}\left(x , x^{k - 1}\right) : \; x \in \barc \right\}.
	        	\end{equation}}}
\bigskip

	Under our standing Assumptions \ref{A:AssumptionA}, \ref{A:AssumptionB} and \ref{A:AssumptionC} the algorithm is well-defined by Lemma \ref{P:WellProximal}. In the next section we analyze its properties, and establish a global convergence result to a critical point of $\Psi$.

\section{Convergence Analysis of BPG} \label{Sec:Analysis}
	Throughout this section, we take the following as our blanket assumption
	\begin{itemize}
		\item[$\rm{(i)}$] \textbf{L-smad} holds on $C$.
		\item[$\rm{(ii)}$] Assumptions \ref{A:AssumptionA}, \ref{A:AssumptionB} and \ref{A:AssumptionC} hold.
	\end{itemize}

\subsection{Properties of the Algorithm}	
	Using the Descent Lemma (see Lemma \ref{L:NoLipsDescent}), we easily obtain the following key estimation for the composite objective function $\Psi$, which will play an essential role to derive our main convergence results.
	\begin{lemma}[Sufficient Decrease Property] \label{L:DescNcvx}
		For any $x \in \idom h$, and any $x^{+} \in \idom h$ defined by
        \begin{equation} \label{L:DescNcvx:1}
            x^{+} \in \argmin \left\{ f\left(u\right) + \act{u - x , \nabla g\left(x\right)} + \frac{1}{\lambda}D_{h}\left(u , x\right) : \; u \in \barc \right\}, \quad (\lambda > 0),
        \end{equation}
        we have
        \begin{equation} \label{L:DescNcvx:2}
            \lambda\Psi\left(x^{+}\right) \leq \lambda\Psi\left(x\right) - \left(1 - \lambda L\right)D_{h}\left(x^{+}  , x\right).
        \end{equation}
    \end{lemma}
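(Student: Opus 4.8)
The plan is to combine two ingredients: the minimality characterization of $x^{+}$ as a solution of the subproblem \eqref{L:DescNcvx:1}, and the one-sided descent inequality furnished by the \textbf{L-smad} property through Lemma \ref{L:NoLipsDescent}. The key idea is to use the \emph{specific} feasible test point $u = x$ in the minimality inequality, which makes the proximal term vanish.

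First I would exploit that $x^{+}$ is a global minimizer of the objective in \eqref{L:DescNcvx:1} over $\barc$. Since $x \in \idom h \subset \barc$, the point $u = x$ is admissible, so comparing the objective value at $u = x^{+}$ with its value at $u = x$, and using $D_{h}(x,x) = 0$ together with $\act{x - x, \nabla g(x)} = 0$, yields
\begin{equation*}
	f\left(x^{+}\right) + \act{x^{+} - x , \nabla g\left(x\right)} + \frac{1}{\lambda} D_{h}\left(x^{+} , x\right) \leq f\left(x\right).
\end{equation*}
Multiplying by $\lambda > 0$ and rearranging isolates $\lambda f(x^{+})$:
\begin{equation*}
	\lambda f\left(x^{+}\right) \leq \lambda f\left(x\right) - \lambda\act{x^{+} - x , \nabla g\left(x\right)} - D_{h}\left(x^{+} , x\right).
\end{equation*}

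Second, I would invoke the \textbf{L-smad} property in its one-sided form $D_{g}(x^{+},x) \leq L D_{h}(x^{+},x)$ (the $Lh - g$ convex direction of Lemma \ref{L:NoLipsDescent}), which reads
\begin{equation*}
	g\left(x^{+}\right) \leq g\left(x\right) + \act{\nabla g\left(x\right) , x^{+} - x} + L D_{h}\left(x^{+} , x\right);
\end{equation*}
this applies because both $x$ and $x^{+}$ lie in $\idom h$, the latter being guaranteed by Assumption \ref{A:AssumptionC} via Lemma \ref{P:WellProximal}. Finally, adding $\lambda$ times this inequality to the rearranged minimality inequality, the linear terms $\lambda\act{\nabla g(x), x^{+}-x}$ cancel exactly, and the two contributions in $D_{h}(x^{+},x)$ combine into $-(1 - \lambda L) D_{h}(x^{+},x)$, giving precisely \eqref{L:DescNcvx:2}.

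I do not expect a genuine obstacle here: once $u = x$ is chosen as the test point, the result is a two-line algebraic combination of the minimality inequality and the descent lemma. The only points deserving care are the admissibility of $u = x$ (ensured by $x \in \idom h \subset \barc$) and the fact that $\nabla g$, $\nabla h$ and the Bregman distance are evaluated only at points of $\idom h$, so that Lemma \ref{L:NoLipsDescent} is legitimately applicable; both are covered by the standing hypotheses.
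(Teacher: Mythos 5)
Your proposal is correct and follows essentially the same route as the paper's proof: take $u = x$ as the feasible test point in the minimality condition for \eqref{L:DescNcvx:1}, invoke the one-sided descent inequality $D_{g}\left(x^{+} , x\right) \leq L D_{h}\left(x^{+} , x\right)$ from Lemma \ref{L:NoLipsDescent}, and combine the two so the linear terms cancel. The only cosmetic difference is that you multiply by $\lambda$ and add the inequalities, while the paper chains them directly; also note that $x^{+} \in \idom h$ is already part of the lemma's hypothesis, so the appeal to Assumption \ref{A:AssumptionC} is not strictly needed there.
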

    In particular, with $0 < \lambda L < 1$, the sufficient decrease in the composite objective function value $\Psi$ is ensured.
    \begin{proof}
    		By the definition of global optimality for \eqref{L:DescNcvx:1}, we obtain by taking $u = x \in \idom h$, that
        \begin{equation}  \label{L:DescNcvx:3}
       		f\left(x^{+}\right) + \act{x^{+} - x , \nabla g\left(x\right)} + \frac{1}{\lambda} D_{h}\left(x^{+} , x\right) \leq f \left(x\right).
        \end{equation}
        Invoking the Descent Lemma (see Lemma \ref{L:NoLipsDescent}) for $g$ we then get with the above,
        \begin{align*}
            g\left(x^{+}\right) + f\left(x^{+}\right) & \leq g\left(x\right) + \act{x^{+} - x , \nabla g\left(x\right)} + LD_{h}\left(x^{+} , x\right) + f\left(x^{+}\right) \\
            &\leq g\left(x\right) + LD_{h}\left(x^{+} , x\right) + f \left(x\right) - \frac{1}{\lambda}D_{h}\left(x^{+} , x\right)  \\
            & = g\left(x\right) + f\left(x\right) - \left(\frac{1}{\lambda} - L\right)D_{h}\left(x^{+} , x\right),
        \end{align*}
        and hence with $\Psi = f + g$, the desired inequality  for $\Psi$ is proved, and the last statement immediately follows with $0< \lambda L < 1$.
    \end{proof}
	\begin{remark}
		\begin{itemize}
			\item[$\rm{(i)}$] When $f$ is assumed convex, using the global optimality condition for the corresponding convex problem, which defines $x^{+}$ through \eqref{L:DescNcvx:1}, followed by using the three points identity property of a Bregman distance, and the \textbf{L-smad} property, one easily see that for any $x \in \idom h$, the inequality \eqref{L:DescNcvx:2} can be strengthened and reads
       			\begin{equation*}
            			\lambda\left(\Psi\left(x^{+}\right) - \Psi\left(u\right)\right) \leq D_{h}\left(u , x\right) - D_{h}\left(u , x^{+}\right) - \left(1 - \lambda L\right)D_{h}\left(x^{+} , x\right) - \lambda D_{g}(u,x), \quad \forall \,\, u \in \dom h.
       			\end{equation*}
 				When $g$ is also convex, then the last term $-\lambda D_{g}\left(u , x\right) \leq 0$, and the above result recovers the key estimation result proven in \cite[Lemma 5, p. 340]{BBT2016}.
			\item[$\rm{(ii)}$] When $g$ is {\em nonconvex} and the {\em Full Descent Lemma} (see Lemma \ref{L:NoLipsDescent}) holds, \ie there exists $L > 0$ with $Lh + g$ convex, then we have $- D_{g}\left(u , x\right) \leq L D_{h}\left(u , x\right)$ and hence the above inequality reduces to
	    			\begin{equation*}
            			\lambda\left(\Psi\left(x^{+}\right) - \Psi\left(u\right)\right) \leq \left(1 + \lambda L\right)D_{h}\left(u , x\right) - \left(1 - \lambda L\right)D_{h}\left(x^{+} , x\right) - D_{h}\left(u , x^{+}\right), \quad \forall \,\, u \in \dom h.
	        	\end{equation*}
		\end{itemize}    		
	\end{remark}
    The result given in Lemma \ref{L:DescNcvx} is valid for any $\lambda > 0$, and as seen, imposing the condition $0 < \lambda L < 1$ ensures a sufficient decrease in the composite objective function value $\Psi$. This fact yields the following result.
	\begin{proposition} \label{P:SuffDesc}
   		Let $\Seq{x}{k}$ be a sequence generated by BPG with $0 < \lambda L < 1$. Then the following assertions hold:
        \begin{itemize}
            \item[$\rm{(i)}$] The sequence $\left\{ \Psi\left(x^{k}\right) \right\}_{k \in \nn}$ is
            		nonincreasing.
            \item[$\rm{(ii)}$] $\sum_{k = 1}^{\infty} D_{h}\left(x^{k} , x^{k - 1}\right) < \infty$, and
            		hence the sequence $\left\{ D_{h}\left(x^{k} , x^{k - 1}\right)  \right\}_{k \in \nn}$ converges to zero.
           	\item[$\rm{(iii)}$] $\min_{1 \leq k \leq n} D_{h}\left(x^{k} , x^{k - 1}\right) \leq \frac{\lambda}{n}\left( \frac{\Psi\left(x^{0}\right) - \Psi_{\ast}}{1 - \lambda L}\right),$ where $\Psi_{\ast} = v(\PPP) >-\infty$ (by Assumption \ref{A:AssumptionA}(iv)).
		\end{itemize}    		
    \end{proposition}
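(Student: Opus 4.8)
The plan is to reduce all three assertions to a single application of the Sufficient Decrease Property (Lemma~\ref{L:DescNcvx}) along the generated sequence, followed by an elementary summation. First I would specialize Lemma~\ref{L:DescNcvx} to the BPG iteration by taking $x = x^{k-1}$ and $x^{+} = x^{k}$; note that both points lie in $\idom h$ thanks to Assumption~\ref{A:AssumptionC} together with the well-posedness guaranteed by Lemma~\ref{P:WellProximal}, so the lemma is applicable at every step. This yields the per-iteration estimate
\begin{equation} \label{my:perstep}
	\lambda \Psi\left(x^{k}\right) \leq \lambda \Psi\left(x^{k-1}\right) - \left(1 - \lambda L\right) D_{h}\left(x^{k} , x^{k-1}\right), \quad \forall \, k \geq 1.
\end{equation}

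For assertion $\rm{(i)}$, I would observe that since $h \in \GGG(C)$ is convex, the Bregman distance satisfies $D_{h}\left(x^{k} , x^{k-1}\right) \geq 0$, and since $0 < \lambda L < 1$ we have $1 - \lambda L > 0$. Hence the subtracted term in \eqref{my:perstep} is nonnegative, and dividing by $\lambda > 0$ gives $\Psi\left(x^{k}\right) \leq \Psi\left(x^{k-1}\right)$, proving that the sequence is nonincreasing.

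For assertions $\rm{(ii)}$ and $\rm{(iii)}$, the key step is to sum \eqref{my:perstep} telescopically from $k = 1$ to $n$, which produces
\begin{equation*}
	\left(1 - \lambda L\right) \sum_{k=1}^{n} D_{h}\left(x^{k} , x^{k-1}\right) \leq \lambda \left( \Psi\left(x^{0}\right) - \Psi\left(x^{n}\right) \right) \leq \lambda \left( \Psi\left(x^{0}\right) - \Psi_{\ast} \right),
\end{equation*}
where the last inequality uses $\Psi\left(x^{n}\right) \geq v(\PPP) = \Psi_{\ast} > -\infty$ from Assumption~\ref{A:AssumptionA}$\rm{(iv)}$. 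Since the right-hand side is a finite constant independent of $n$ and every summand is nonnegative, letting $n \to \infty$ shows the series converges, which is $\rm{(ii)}$ (and in particular forces the terms to tend to zero). For $\rm{(iii)}$, I would bound the minimum by the sum via $n \cdot \min_{1 \leq k \leq n} D_{h}\left(x^{k} , x^{k-1}\right) \leq \sum_{k=1}^{n} D_{h}\left(x^{k} , x^{k-1}\right)$, combine this with the displayed estimate, and divide through by $n\left(1 - \lambda L\right) > 0$.

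I do not anticipate a genuine obstacle here: once Lemma~\ref{L:DescNcvx} is in hand, the argument is the standard monotone summable-descent scheme. The only points requiring care are verifying that the iterates remain in $\idom h$ so that the descent lemma applies at each step, and invoking the nonnegativity of $D_{h}$ (which rests on the convexity of $h$ built into the class $\GGG(C)$) together with the sign of $1 - \lambda L$; both are immediate under the blanket assumptions of this section.
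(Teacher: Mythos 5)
Your proposal is correct and follows essentially the same route as the paper's own proof: apply Lemma~\ref{L:DescNcvx} along the iterates with $x = x^{k-1}$, $x^{+} = x^{k}$, use $1 - \lambda L > 0$ and $D_{h} \geq 0$ for monotonicity, then sum telescopically and bound the minimum by the average. Your explicit remarks about the iterates remaining in $\idom h$ and the nonnegativity of $D_{h}$ simply make precise what the paper leaves implicit; there is no substantive difference.
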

    \begin{proof}
        \begin{itemize}
            \item[$\rm{(i)}$] Fix $k \geq 1$. Under our assumptions and using the iterative step \eqref{NoLipsPG}, we can apply Lemma \ref{L:DescNcvx} with $x = x^{k - 1}$ and $x^{+} = x^{k}$, to obtain
           		\begin{equation} \label{P:SuffDesc:1}
                		\lambda\left(\Psi\left(x^{k}\right) - \Psi\left(x^{k - 1}\right)\right) \leq -\left(1 - \lambda L\right)D_{h}\left(x^{k} , x^{k - 1}\right).
              	\end{equation}
               	Thus, with $0 < \lambda L <1$, we immediately obtain that the sequence $\left\{ \Psi\left(x^{k}\right) \right\}_{k \in \nn}$ is nonincreasing.
         	\item[$\rm{(ii)}$] Let $n$ be a positive integer. Summing the above inequality from $k = 1$ to $n$ we get
                \begin{equation} \label{P:SuffDesc:2}
                   	\sum_{k = 1}^{n} D_{h}\left(x^{k} , x^{k - 1}\right) \leq \frac {\lambda\left(\Psi\left(x^{0}\right) - \Psi\left(x^{n}\right)\right)}{1 - \lambda L} \leq \frac {\lambda\left(\Psi\left(x^{0}\right) - \Psi_{\ast}\right)}{1 - \lambda L},
                \end{equation}
	 			where $\Psi_{\ast} = v(\PPP) > -\infty$. Taking the limit as $n \rightarrow \infty$, we obtain the first desired assertion (ii), from which  we immediately deduce that $\left\{ D_{h}\left(x^{k} , x^{k - 1}\right)  \right\}_{k \in \nn}$ converges to zero.
        	\item[$\rm{(iii)}$] From \eqref{P:SuffDesc:2} we also obtain,
                \begin{equation*}
                   	n\min_{1\leq k \leq n} D_{h}\left(x^{k} , x^{k - 1}\right) \leq \sum_{k = 1}^{n} D_{h}\left(x^{k} , x^{k - 1}\right) \leq \frac{\lambda\left(\Psi\left(x^{0}\right) - \Psi_{\ast}\right)}{1 - \lambda L},
                \end{equation*}
        		which after division by $n$ yields the desired result.
        \end{itemize}
        \vspace{-0.2in}
    \end{proof}	
	Clearly, with $h$ being the energy kernel on $C \equiv \real^{d}$, the above proposition yields all the classical results for the nonconvex proximal gradient method (\eg set $\lambda L = 1/2$). In particular, in this case one obtains that the corresponding classical gradient mapping , defined by $G\left(x^{k - 1}\right) := \norm{x^{k - 1} - T_{\lambda}\left(x^{k - 1}\right)}$ (see \cite[Theorem 2.3, p. 61]{BT09}) converges to zero at a rate of $O(1/\sqrt{n})$.
\medskip

	Finally, note that thanks to the invariance of \textbf{$L$-smad} (cf. Section \ref{SSec:Smod}), we can assume that $h$ is $\sigma$-strongly convex on $C$. In that case, for any nonempty set $S \subset \real^{d}$, let $\dist(y, S) := \inf\left\{ \norm{u - y} : \; u \in S \right\}$. Since $x^{k} \in T_{\lambda}\left(x^{k - 1}\right)$, using the $\sigma$-strong convexity of $h$ combined  with Proposition \ref{P:SuffDesc}(iii) we immediately obtain the following rate of convergence result for two successive iterates:
  \begin{equation*}
      \min_{1 \leq k \leq n} \dist^{2}\left(x^{k - 1} , T_{\lambda}\left(x^{k - 1}\right)\right) \leq \min_{1 \leq k \leq n} \norm{x^{k} - x^{k - 1}}^{2} \leq \frac{1}{n}\cdot\frac{\lambda\left(\Psi\left(x^{0}\right) - \Psi_{\ast}\right)}{\sigma\left(1 - \lambda L\right)}.
   	\end{equation*}

\subsection{ Global Convergence of BPG}\label{ss:global}
	Throughout this section we consider problem $(\PPP)$ defined on $C \equiv \real^{d}$, namely
	\begin{equation*}
		(\PPP) \qquad v(\PPP) = \inf \left\{ f\left(x\right) + g\left(x\right) : \; x \in \real^{d} \right\}.
	\end{equation*}
	Here, throughout this subsection, we additionally assume that
	\begin{assumption} \label{A:AssumptionD}
		\begin{itemize}
			\item[$\rm{(i)}$] $\dom h = \real^{d}$ and $h$ is $\sigma$-strongly convex on $\real^{d}$.
			\item[$\rm{(ii)}$] $\nabla h$ and $\nabla g$ are Lipschitz continuous on any bounded subset of $\real^{d}$.
		\end{itemize}
	\end{assumption}
	Noting that Assumption \ref{A:AssumptionC} is automatically fulfilled since $C = \real^{d}$. As usual we use the concept of limiting subdifferential for $f$ so that thanks to Fermat's rule \cite[Theorem 10.1, p. 422]{RW98}, the set of critical points of $\Psi$ is given by:
	\begin{equation*}
		\crit \Psi = \left\{ x \in \real^{d} : \;  0 \in \partial \Psi\left(x\right) \equiv \partial f\left(x\right) + \nabla g\left(x\right) \right\}.
	\end{equation*}
	To prove the global convergence of the sequence $\Seq{x}{k}$ generated by BPG to a critical point of $\Psi$, we apply the methodology developed in \cite{BST2014}. For the reader's convenience, and to make this paper self-contained,  the main tools and relevant proofs are given in Appendix \ref{A:GlobConv}. First, we describe three key ingredients of the methodology \cite{BST2014}.
	\begin{definition}[Gradient-like descent sequence] \label{def-descent}
		Let $F: \rr^d \to (-\infty, \infty]$ be a proper and lower semicontinuous function. A sequence $\Seq{x}{k}$ is called \textit{a gradient-like descent sequence} for $F$ if the following three conditions hold:
		\begin{itemize}
        		\item[$\rm{(C1)}$] \textit{Sufficient decrease property.} There exists a positive scalar $\rho_{1}$ such that
            		\begin{equation*}
                		\rho_{1}\norm{x^{k + 1} - x^{k}}^{2} \leq F(x^{k}) - F(x^{k + 1}), \quad \forall \,\, k \in \nn.
	            \end{equation*}
    			\item[$\rm{(C2)}$] \textit{A subgradient lower bound for the iterates gap.} There exists a positive scalar $\rho_{2}$ such that         	
		   		\begin{equation*}
	    				\norm{w^{k + 1}} \leq \rho_{2}\norm{x^{k + 1} - x^{k}} \text{ for some } w^{k + 1} \text{ in }\partial F(x^{k + 1}), \quad \forall \,\, k \in \nn.
	            \end{equation*}
			\item[$\rm{(C3)}$] Let $\overline{x}$ be a limit point of a subsequence $\left\{ x^{k} \right\}_{k \in {\cal K}}$, then $\limsup_{k \in {\cal K} \subset \nn} F\left(x^{k}\right) \leq F\left(\overline{x}\right)$.
        \end{itemize}	
	\end{definition}
	The two conditions (C1) and (C2) are typical for any descent type algorithm (see, \eg \cite{AB2009}). They are also the main tools to prove subsequential convergence, as recorded below. The condition (C3) is a minimal and very weak continuity requirement, which, in particular, holds when $F$ is continuous. We denote by $\omega\left(x^{0}\right)$ the set of all limit points of $\Seq{x}{k}$.
	\begin{lemma}[Abstract subsequence convergence] \label{L:SubConv}
		Let $\Seq{x}{k}$ be a bounded gradient-like descent sequence for $F$. Then, $\omega\left(x^{0}\right)$ is a nonempty and compact subset of $\crit F$, and we have
		\begin{equation} \label{L:SubConv:1}
			\limit{k}{\infty} \dist\left(x^{k} , \omega\left(x^{0}\right)\right) = 0.
		\end{equation}
		In addition, the  function $F$ is finite and constant on $\omega\left(x^{0}\right)$.
	\end{lemma}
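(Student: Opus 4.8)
The plan is to extract from the three defining conditions the standard consequences of a descent method and then combine them with the closedness of the limiting subdifferential. First I would note that the bounded sequence lives in a compact set $K := \overline{\{x^k : k \in \nn\}}$; since $F$ is lower semicontinuous it attains its infimum on $K$ and is thus bounded below there, so the sequence $\{F(x^k)\}_{k \in \nn}$, which is nonincreasing by (C1), converges to a finite value $F_\ast$. Telescoping the sufficient decrease inequality (C1) then gives $\sum_{k} \norm{x^{k+1} - x^k}^2 \leq \rho_1^{-1}\left(F(x^0) - F_\ast\right) < \infty$, whence $\norm{x^{k+1} - x^k} \to 0$. Writing $\omega(x^0) = \bigcap_{n} \overline{\{x^k : k \geq n\}}$ as a nested intersection of nonempty compact subsets of $K$ shows at once, via Cantor's intersection theorem, that $\omega(x^0)$ is nonempty and compact.

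Next I would fix a limit point $\bar x \in \omega(x^0)$, say $x^{k_j} \to \bar x$ along $\mathcal{K} = \{k_j\}$. The pivotal preliminary step is to show that $F$ takes the constant value $F_\ast$ on $\omega(x^0)$: lower semicontinuity gives $F(\bar x) \leq \liminf_j F(x^{k_j}) = F_\ast$, while condition (C3) gives the reverse inequality $\limsup_j F(x^{k_j}) \leq F(\bar x)$; since $F(x^{k_j}) \to F_\ast$ these force $F(\bar x) = F_\ast$. This establishes the last assertion of the lemma and, crucially, supplies the value-convergence needed below.

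To show $\bar x \in \crit F$, I would pass to the shifted subsequence $x^{k_j+1}$. Because $\norm{x^{k_j+1} - x^{k_j}} \to 0$ we still have $x^{k_j+1} \to \bar x$; moreover the full sequence $\{F(x^k)\}$ converges to $F_\ast$, hence so does $\{F(x^{k_j+1})\}$, and by the previous step $F_\ast = F(\bar x)$, so $F(x^{k_j+1}) \to F(\bar x)$. Condition (C2) furnishes $w^{k_j+1} \in \partial F(x^{k_j+1})$ with $\norm{w^{k_j+1}} \leq \rho_2 \norm{x^{k_j+1} - x^{k_j}} \to 0$. Invoking the closedness of the graph of the limiting subdifferential---valid precisely because $x^{k_j+1} \to \bar x$, $w^{k_j+1} \to 0$, \emph{and} $F(x^{k_j+1}) \to F(\bar x)$---yields $0 \in \partial F(\bar x)$, i.e. $\bar x \in \crit F$. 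Finally, $\dist(x^k, \omega(x^0)) \to 0$ follows by a standard contradiction argument: otherwise some subsequence would remain at distance $\geq \epsilon > 0$ from $\omega(x^0)$, yet by boundedness it would admit a further convergent subsequence whose limit lies in $\omega(x^0)$ by definition, a contradiction.

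The main obstacle is the value-convergence $F(x^{k_j+1}) \to F(\bar x)$: lower semicontinuity alone only delivers one inequality, while the closedness of the limiting subdifferential graph genuinely requires the function \emph{values}, not merely the arguments, to converge to $F(\bar x)$. This is exactly the role played by the otherwise innocuous condition (C3), which bridges the gap between $\liminf_j F(x^{k_j})$ and $\lim_k F(x^k) = F_\ast$; everything else is bookkeeping around boundedness and the telescoping of (C1).
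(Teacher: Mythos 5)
Your proof is correct and follows essentially the same route as the paper's: nonemptiness and compactness of $\omega\left(x^{0}\right)$ from boundedness, value convergence $F\left(x^{k_{j}}\right) \to F\left(\overline{x}\right)$ obtained by combining (C3) with lower semicontinuity, vanishing subgradients from (C1)--(C2), and the closedness of the graph of the limiting subdifferential (which indeed requires exactly the value convergence you highlight) to conclude $0 \in \partial F\left(\overline{x}\right)$. The only differences are cosmetic: you make explicit the telescoping of (C1), the index shift to $x^{k_{j}+1}$, and the contradiction argument for $\dist\left(x^{k} , \omega\left(x^{0}\right)\right) \to 0$, all of which the paper leaves implicit or states tersely.
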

	\begin{proof}
		See Appendix \ref{A:GlobConv}.
	\end{proof}
	\begin{remark}[Boundedness of Sequences]
		Observe that the mere coercivity of $F$ ensures that any gradient-like sequence is bounded.
	\end{remark}
	The above, together with the so-called nonsmooth Kurdyka-{\L}ojasiewicz (KL) property (see \cite{BDL2006} and Appendix \ref{A:GlobConv} for details) allows us to establish our main convergence results of BPG.
	\begin{theorem}[Convergence of BPG] \label{T:GlobConv}
		Let $\Seq{x}{k}$ be a sequence generated by BPG which is assumed to be bounded and let $0 < \lambda L < 1$. The following assertions hold.
		\begin{itemize}
			\item[$\rm{(i)}$] \textbf{Subsequential convergence.} Any limit point of the sequence $\Seq{x}{k}$ is a critical point of $\Psi$.
			\item[$\rm{(ii)}$] \textbf{Global convergence.} Suppose that $\Psi$ satisfies the KL property on $\dom{\Psi}$. Then, the sequence $\Seq{x}{k}$ has finite length and converges to a critical point $x^{\ast}$ of $\Psi$.
		\end{itemize}
	\end{theorem}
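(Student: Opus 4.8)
The plan is to show that the sequence $\Seq{x}{k}$ generated by BPG is a \emph{gradient-like descent sequence} for $\Psi$ in the sense of Definition \ref{def-descent}, after which both assertions follow from the abstract machinery: part (i) is immediate from Lemma \ref{L:SubConv}, and part (ii) follows from the KL-based finite-length argument recalled in Appendix \ref{A:GlobConv}. Throughout I would exploit Assumption \ref{A:AssumptionD}: the $\sigma$-strong convexity of $h$ (to convert the Bregman decrease into a Euclidean one) and the Lipschitz continuity of $\nabla h$ and $\nabla g$ on bounded sets. Since the sequence is assumed bounded, I would fix a bounded convex set containing $\Seq{x}{k}$ and let $L_{g}, L_{h}$ denote the corresponding Lipschitz constants of $\nabla g$ and $\nabla h$ on it.

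First, for the sufficient decrease condition (C1), I would start from the estimate \eqref{L:DescNcvx:2} of Lemma \ref{L:DescNcvx} applied with $x = x^{k-1}$ and $x^{+} = x^{k}$, namely $\lambda\Psi(x^{k}) \leq \lambda\Psi(x^{k-1}) - (1 - \lambda L) D_{h}(x^{k}, x^{k-1})$, and bound below $D_{h}(x^{k}, x^{k-1}) \geq (\sigma/2)\norm{x^{k} - x^{k-1}}^{2}$ via strong convexity of $h$. With $0 < \lambda L < 1$ this yields (C1) with $\rho_{1} = \sigma(1 - \lambda L)/(2\lambda) > 0$. Next, for the subgradient bound (C2), I would write the first-order optimality condition for the proximal subproblem \eqref{NoLipsPG} defining $x^{k}$, namely $0 \in \partial f(x^{k}) + \nabla g(x^{k-1}) + \lambda^{-1}(\nabla h(x^{k}) - \nabla h(x^{k-1}))$. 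Setting $w^{k} := \nabla g(x^{k}) - \nabla g(x^{k-1}) - \lambda^{-1}(\nabla h(x^{k}) - \nabla h(x^{k-1}))$, one checks $w^{k} \in \partial f(x^{k}) + \nabla g(x^{k}) = \partial\Psi(x^{k})$, and the triangle inequality together with the Lipschitz bounds gives $\norm{w^{k}} \leq (L_{g} + L_{h}/\lambda)\norm{x^{k} - x^{k-1}}$, i.e. (C2) with $\rho_{2} = L_{g} + L_{h}/\lambda$.

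The delicate point is the continuity condition (C3). Given a subsequence $x^{k} \to \overline{x}$ over $k \in \mathcal{K}$, the continuity of $g$ handles the smooth part, so I only need $\limsup_{k\in\mathcal{K}} f(x^{k}) \leq f(\overline{x})$. Here I would use the global optimality defining $x^{k}$ evaluated at the competitor $u = \overline{x}$, giving $f(x^{k}) + \act{x^{k} - x^{k-1}, \nabla g(x^{k-1})} + \lambda^{-1}D_{h}(x^{k}, x^{k-1}) \leq f(\overline{x}) + \act{\overline{x} - x^{k-1}, \nabla g(x^{k-1})} + \lambda^{-1}D_{h}(\overline{x}, x^{k-1})$. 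Since $\norm{x^{k} - x^{k-1}} \to 0$ (Proposition \ref{P:SuffDesc}(ii) and strong convexity), also $x^{k-1} \to \overline{x}$ along $\mathcal{K}$; taking $\limsup$ and using continuity of $\nabla g$, $h$ and $\nabla h$ makes every term except $f(x^{k})$ and $f(\overline{x})$ vanish, yielding $\limsup_{k\in\mathcal{K}} f(x^{k}) \leq f(\overline{x})$. Combined with lower semicontinuity of $f$ this forces $f(x^{k}) \to f(\overline{x})$, hence $\Psi(x^{k}) \to \Psi(\overline{x})$, which is (C3) in its strongest form. I expect the verification of (C2) and (C3) — the translation of the proximal optimality condition into a genuine subgradient of $\Psi$, and the recovery of continuity of $f$ along the sequence — to be the technical heart of the argument, everything resting on boundedness together with Assumption \ref{A:AssumptionD}(ii).

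With (C1)--(C3) in hand, Lemma \ref{L:SubConv} gives that $\omega\left(x^{0}\right)$ is a nonempty compact subset of $\crit\Psi$ on which $\Psi$ is finite and constant, proving (i). For (ii), I would invoke the uniformized KL property on the compact set $\omega\left(x^{0}\right)$ (Appendix \ref{A:GlobConv}): combining the KL inequality with (C1) and (C2) in the now-standard telescoping and concavity argument of \cite{BST2014} shows $\sum_{k} \norm{x^{k+1} - x^{k}} < \infty$, so $\Seq{x}{k}$ is Cauchy and converges to some $x^{\ast}$, which necessarily lies in $\omega\left(x^{0}\right) \subset \crit\Psi$. This establishes finite length and global convergence to a critical point of $\Psi$.
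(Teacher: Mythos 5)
Your proposal is correct and follows essentially the same route as the paper's own proof: verifying (C1) via Lemma \ref{L:DescNcvx} and the $\sigma$-strong convexity of $h$, (C2) via the proximal optimality condition together with Assumption \ref{A:AssumptionD}(ii), (C3) by testing the subproblem's global optimality at the limit point and passing to the limit, and then invoking Lemma \ref{L:SubConv} and the abstract KL convergence theorem of Appendix \ref{A:GlobConv}. The constants you obtain ($\rho_{1} = \sigma(1-\lambda L)/(2\lambda)$, $\rho_{2} = L_{g} + L_{h}/\lambda$) and the choice of subgradient $w^{k}$ match the paper's argument exactly.
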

	\begin{proof}
		(i) We first prove that the sequence $\Seq{x}{k}$ is a gradient-like descent sequence; namely it satisfies the three conditions (C1), (C2) and (C3) as described in  Definition \ref{def-descent}, and the result will then be established by invoking Lemma \ref{L:SubConv}.
\medskip

		From Proposition \ref{P:SuffDesc} (see \eqref{P:SuffDesc:1}) we have that
		\begin{equation} \label{T:GlobConv:1}
       		\Psi(x^{k})  - \Psi(x^{k + 1}) \geq \left(\frac{1}{\lambda} - L\right)D_{h}\left(x^{k + 1} , x^{k}\right) \geq \left(\frac{1}{\lambda} - L\right)\frac{\sigma}{2}\norm{x^{k + 1} - x^{k}}^{2},
        \end{equation}
		where the last inequality follows from the $\sigma$-strong convexity of $h$ (see Assumption \ref{A:AssumptionD}(i)). This proves that condition (C1) holds true.
\medskip

		Writing the optimality condition of the optimization problem which defines $x^{k + 1}$ (see \eqref{NoLipsPG}) yields that
		\begin{equation*}
			0 \in \partial f(x^{k + 1}) + \nabla g(x^{k}) + \frac{1}{\lambda}\left(\nabla h(x^{k + 1}) - \nabla h(x^{k})\right).
		\end{equation*}
		Therefore, by defining
		\begin{equation*}
			w^{k + 1} \equiv \nabla g(x^{k + 1}) - \nabla g(x^{k}) + \frac{1}{\lambda}\left(\nabla h(x^{k}) - \nabla h(x^{k + 1})\right),
		\end{equation*}
		we obviously obtain that $w^{k + 1} \in \partial \Psi\left(x^{k + 1}\right)$. Since $\Seq{x}{k}$ is a bounded sequence and both $\nabla h$ and $\nabla g$ are Lipschitz continuous on bounded subsets of $\rr^d$ (see Assumption \ref{A:AssumptionD}(ii)), there exists $M > 0$ such that
		\begin{equation*}
			\norm{w^{k + 1}} \leq \norm{\nabla g(x^{k + 1}) - \nabla g(x^{k})} + \frac{1}{\lambda}\norm{\nabla h(x^{k}) - \nabla h(x^{k + 1})} \leq M\left(1 + \frac{1}{\lambda}\right)\norm{x^{k + 1} - x^{k}}.
		\end{equation*}
		This proves that condition (C2) also holds true.
\medskip

		Consider a subsequence $\left\{ x^{n_{k}} \right\}_{n \in \nn}$ which converges to some point  $x^{\ast}$ (there exists such a subsequence since the sequence $\Seq{x}{k}$ is assumed to be bounded). Using \eqref{T:GlobConv:1} and Proposition \ref{P:SuffDesc}(iii) we obtain that $\lim_{k \rightarrow \infty}  \norm{x^{k} - x^{k - 1}} = 0$. Therefore, the sequence $\left\{ x^{n_k - 1} \right\}_{k \in \nn}$ also converges to $x^{\ast}$. In addition, since $h$ is continuously differentiable on $\real^{d}$ we have that $\lim_{k \rightarrow \infty}  D_{h}\left(x^{\ast} , x^{k - 1}\right) = 0$.  Now, from \eqref{NoLipsPG}, it follows that
		\begin{align*}
       		f(x^{k}) + \act{x^{k} - x^{k - 1} , \nabla g(x^{k - 1})} + \frac{1}{\lambda}D_{h}\left(x^{k} , x^{k - 1}\right) & \leq f (x^{\ast}) + \act{x^{\ast} - x^{k - 1} , \nabla g(x^{k - 1})} \\
       		& + \frac{1}{\lambda}D_{h}\left(x^{\ast} , x^{k - 1}\right),
	  	\end{align*}
	  	that is,
		\begin{equation*}
       		f(x^{k}) \leq f\left(x^{\ast}\right) + \act{x^{\ast} - x^{k} , \nabla g\left(x^{k - 1}\right)} + \frac{1}{\lambda}D_{h}\left(x^{\ast} , x^{k - 1}\right) - \frac{1}{\lambda}D_{h}\left(x^{k} , x^{k - 1}\right).
	  	\end{equation*}
		Substituting $k$ by $n_{k}$ and letting $k \rightarrow \infty$, we obtain from Proposition \ref{P:SuffDesc}(iii) and the facts mentioned above, that
		\begin{equation*}
       		\limsup_{k \rightarrow \infty} f\left(x^{n_{k}}\right) \leq f\left(x^{\ast}\right).
	  	\end{equation*}
	  	This proves condition (C3), and thanks to Lemma \ref{L:SubConv}, the first item of the theorem follows. Moreover, we also obtain that $\Psi$ is constant on $\omega(x^0)$.
\medskip

		(ii) Since $\Psi$ is assumed to have the KL property, and we have just shown that $\Psi$ is constant on $\omega(x^0)$, we can then apply Theorem \ref{T:AbstrGlob} given in Appendix \ref{A:GlobConv}, with $F:=\Psi$. This proves that the sequence $\Seq{x}{k}$ has finite length, and is globally  convergent  to a critical point $x^{\ast}$ of $\Psi$.
	\end{proof}
	We end by noting that, when the data $f$ and $g$ are semi-algebraic, the convergence rate of the generated  sequence described in Theorem \ref{T:GlobConv} follows by invoking Theorem \ref{th:rate}, see the Appendix \ref{A:GlobConv} for details.

\section{Application to Quadratic Inverse Problems} \label{Sec:Appl}

\subsection{Motivation and Problem's Statement}
	This section illustrates the potential of our approach and results. To this end, we consider the broad class of problems which consists of solving approximately a system of {\em quadratic} equations and can be described as follows. Given a finite number of symmetric matrices $A_{i} \in \real^{d \times d}$, $ i = 1 , 2 , \ldots , m$, describing the model, and a vector of possibly noisy measurements $b \in \rr^{m}$, the goal is to find $x \in \real^{d}$, that solves the following system
	\begin{equation*}
		x^{T}A_{i}x \simeq b_{i}, \quad i = 1 , 2 , \ldots , m.
	\end{equation*}
	This type of problems is a natural extension of the classical linear inverse problem which arise in the broad area of signal recovery, whereby linear measurements are replaced by {\em quadratic} measurements, see \cite{BY12} and references therein. It also includes the well-known and fundamental class of phase retrieval problems as a special case, which has been, and is still intensively studied  in the literature; see \cite{L2017} for a very recent review on this problem and references therein.
\medskip
	
	Commonly, the system under consideration is underdetermined, thus some prior information on the original signal is incorporated into the model by means of some regularizer represented by a function $f$, possibly nonconvex, nonsmooth, and extended valued (to accommodate constraints). Adopting the usual least-squares model to measure the error, the problem can then be reformulated as the nonconvex optimization problem
	\begin{equation*}
		{\rm (QIP)} \qquad \min  \left\{ \Psi\left(x\right) := \frac{1}{4}\sum_{i = 1}^{m} \left(x^{T}A_{i}x - b_{i}\right)^{2} + \theta f\left(x\right) : \, x \in \real^{d} \right\},
	\end{equation*}
	where $\theta > 0$ plays the role of a penalty parameter controlling the trade-off between matching the data fidelity criteria versus its regularizer $f$.
\medskip

	Clearly, the nonconvex function $g : \rr^{d} \rightarrow \erl$ defined by $g\left(x\right) := \left(1/4\right)\sum_{i = 1}^{m} \left(x^{T}A_{i}x - b_{i}\right)^{2}$ is continuously differentiable on $\real^{d}$, and it {\em does not} admit a global Lipschitz continuous gradient, thus
precluding the use of the classical proximal gradient method.
\medskip

	In the following, we demonstrate the power of our  approach, illustrating how new and simple globally convergent schemes can be derived for the broad class of problems (QIP). In particular, we design two new  algorithms: one for the unconstrained $\ell_{1}$ regularized (QIP) model, and the other for the sparsity $\ell_{0}$ constrained (QIP); both models being known to promote sparse solutions. Our algorithms are given through an explicit {\em closed form formula}, and are thus straightforward to implement. To the best of our knowledge, they appear to be the first globally convergent schemes for this class of problems.

\subsection{Two Algorithms for Sparse Quadratic Inverse Problems}
	Throughout this section, the underlying space is $C \equiv \real^{d}$. Given the nonconvex function $g :\real^{d} \rightarrow \real$ defined by
	\begin{equation*}
		g\left(x\right) = \frac{1}{4}\sum_{i = 1}^{m} \left(x^{T}A_{i}x - b_{i}\right)^{2},
	\end{equation*}
	we consider solving the nonconvex model (QIP) in the following two cases:
	\begin{itemize}
		\item[(a)] \textbf{A convex $\ell_{1}$-norm regularization.} $f : \real^{d} \rightarrow \real$ with $f\left(x\right) = \norm{x}_{1}$.
		\item[(b)] \textbf{A nonconvex $\ell_{0}$-ball constraint.} $f : \real^{d} \rightarrow \erl$ with
$f\left(x\right) = \delta_{\mathbb{B}_{0}^{s}}\left(x\right)$, the indicator of the $\ell_{0}$-ball, where for a positive integer $s < d$,
			\begin{equation*}
				\mathbb{B}_{0}^{s} \equiv \left\{ x : \; \norm{x}_{0} \leq s \right\},
			\end{equation*}
	 		and $\norm{x}_{0}$ is the ``$\ell_{0}$ norm" which counts the number of nonzero components in $x$.
	\end{itemize}
 	To apply our approach on the (QIP) model in both cases (a) and (b), we first need to identify a suitable function $h \in \mathcal{G}(\real^{d})$ such that \textbf{L-smad} holds for the pair $\left(g , h\right)$. Here, we use $h : \real^{d} \rightarrow \real$ given by
 	\begin{equation*}
 		h\left(x\right) = \frac{1}{4}\norm{x}_{2}^{4} + \frac{1}{2}\norm{x}_{2}^{2}.
 	\end{equation*}
	Equipped with this $h$, we now show that \textbf{L-smad} holds, \ie there exists $L > 0$ such that $Lh - g$ is convex on $\real^{d}$.
	\begin{lemma} \label{L:ConsL}
		Let $g$ and $h$ as defined above. Then, for any $L$ satisfying
		\begin{equation*}
			L \geq \sum_{i = 1}^{m} \left(3\norm{A_{i}}^{2} + \norm{A_{i}}\left|b_{i}\right|\right),
		\end{equation*}
		the function $Lh - g$ is convex on $\real^{d}$.
	\end{lemma}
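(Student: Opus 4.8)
The plan is to verify convexity of $Lh - g$ via the Hessian test, which is legitimate since both $g$ and $h$ are twice continuously differentiable on $\real^{d}$; thus it suffices to show $L\nabla^{2} h(x) - \nabla^{2} g(x) \succeq 0$ for every $x \in \real^{d}$. First I would compute the two Hessians explicitly. Writing $h(x) = \tfrac{1}{4}\norm{x}^{4} + \tfrac{1}{2}\norm{x}^{2}$ gives $\nabla h(x) = (\norm{x}^{2} + 1)x$ and hence
\[
	\nabla^{2} h(x) = (\norm{x}^{2} + 1)I + 2xx^{T} \succeq (\norm{x}^{2} + 1)I,
\]
the lower bound following because $2xx^{T} \succeq 0$. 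For $g$, setting $\phi_{i}(x) := x^{T}A_{i}x - b_{i}$ and using symmetry of $A_{i}$ (so that $\nabla \phi_{i}(x) = 2A_{i}x$), one obtains $\nabla g(x) = \sum_{i} \phi_{i}(x) A_{i}x$ and
\[
	\nabla^{2} g(x) = \sum_{i = 1}^{m} \left( 2 A_{i} x x^{T} A_{i} + (x^{T}A_{i}x - b_{i}) A_{i} \right).
\]

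Next I would bound $\nabla^{2} g(x)$ from above in the Loewner order, term by term. The rank-one part is positive semidefinite, and since $A_{i}xx^{T}A_{i} = (A_{i}x)(A_{i}x)^{T}$, we have $2 A_{i} x x^{T} A_{i} \preceq 2\norm{A_{i}x}^{2} I \preceq 2\norm{A_{i}}^{2}\norm{x}^{2} I$. The second, indefinite, part is controlled through its operator norm: its eigenvalues are bounded in modulus by $|x^{T}A_{i}x - b_{i}|\,\norm{A_{i}} \leq (\norm{A_{i}}\norm{x}^{2} + |b_{i}|)\norm{A_{i}}$, whence $(x^{T}A_{i}x - b_{i})A_{i} \preceq (\norm{A_{i}}^{2}\norm{x}^{2} + \norm{A_{i}}|b_{i}|)I$. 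Summing the two contributions yields
\[
	\nabla^{2} g(x) \preceq \sum_{i = 1}^{m} \left( 3\norm{A_{i}}^{2}\norm{x}^{2} + \norm{A_{i}}|b_{i}| \right) I.
\]

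Finally I would close the gap with a per-term comparison against $L\nabla^{2} h$. Allocating $L_{i} := 3\norm{A_{i}}^{2} + \norm{A_{i}}|b_{i}|$ to index $i$, the elementary inequality $L_{i}(\norm{x}^{2} + 1) = 3\norm{A_{i}}^{2}\norm{x}^{2} + \norm{A_{i}}|b_{i}|\norm{x}^{2} + L_{i} \geq 3\norm{A_{i}}^{2}\norm{x}^{2} + \norm{A_{i}}|b_{i}|$ shows that the $i$-th summand in the bound above is dominated by $L_{i}(\norm{x}^{2}+1)$. Summing over $i$ and invoking the lower bound on $\nabla^{2} h$, for $L = \sum_{i} L_{i} = \sum_{i}(3\norm{A_{i}}^{2} + \norm{A_{i}}|b_{i}|)$ we obtain $L\nabla^{2} h(x) \succeq L(\norm{x}^{2}+1)I \succeq \nabla^{2} g(x)$ for all $x$, and monotonicity in $L$ gives the claim for every larger $L$. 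The only delicate point is the treatment of the indefinite term $(x^{T}A_{i}x - b_{i})A_{i}$, whose sign is not fixed: the estimate must rely on the full operator norm (accounting for the $-b_{i}$ shift) rather than on positive semidefiniteness, and its quadratic growth in $\norm{x}^{2}$ must be absorbed by the quartic part of $h$, which is exactly why the kernel $h$ is equipped with the $\tfrac{1}{4}\norm{x}^{4}$ term. The remaining steps are routine Hessian computations and Loewner-order estimates.
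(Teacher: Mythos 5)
Your proof is correct and follows essentially the same route as the paper's: the Hessian test with the same explicit Hessians, the same lower bound $\nabla^{2}h(x) \succeq (\norm{x}^{2}+1)I$, and the same per-term estimate $3\norm{A_{i}}^{2}\norm{x}^{2} + \norm{A_{i}}|b_{i}|$. The only cosmetic difference is that you phrase the bound on $\nabla^{2}g(x)$ via term-by-term Loewner-order comparisons, whereas the paper passes through $\lambda_{\max}(\nabla^{2}g(x)) \leq \norm{\nabla^{2}g(x)}$ and the triangle inequality; the underlying estimates are identical.
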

	\begin{proof}
		Let $x \in \real^{d}$. Since $g$ and $h$ are $C^{2}$ on $\real^{d}$, in order to warrant the convexity of $Lh - g$ it is sufficient to find $L > 0$ such that $L\lambda_{\min}\left(\nabla^{2} h\left(x\right)\right) \geq \lambda_{\max}\left(\nabla^{2} g\left(x\right)\right)$, where $\lambda_{\min}(M)$ and $\lambda_{\max}(M)$ denote the minimal and maximal eigenvalue of a matrix $M$, respectively.
\medskip
		
		By a straightforward computation we obtain that
		\begin{equation*}
			\nabla^{2} g\left(x\right) = \sum_{i = 1}^{m} \left(2A_{i}xx^{T}A_{i} + \left(x^{T}A_{i}x - b_{i}\right)A_{i}\right)\quad \text{and}\quad 	\nabla^{2} h\left(x\right) = \left(\norm{x}_{2}^{2} + 1\right)I_{d} + 2xx^{T},
		\end{equation*}	
		and due to the well-known fact that $\lambda_{\max}(M)  \leq \norm{M}$ we can write
		\begin{equation*}
			\lambda_{\max}\left(\nabla^{2} g\left(x\right)\right) \leq \norm{\nabla^{2}g\left(x\right)} \leq\sum_{i = 1}^{m} \left(3\norm{A_{i}}^{2}\norm{x}^{2} + \norm{A_{i}}\left|b_{i}\right|\right).
		\end{equation*}
		On the other hand, since $\nabla^{2} h\left(x\right) \succeq \left(\norm{x}_{2}^{2} + 1\right)I_{d} $, we obtain that $\lambda_{\min}\left(\nabla^{2} h\left(x\right)\right) \geq \norm{x}_{2}^{2} + 1$. Therefore, taking $L \geq \sum_{i = 1}^{m} \left(3\norm{A_{i}}^{2} + \norm{A_{i}}\left|b_{i}\right|\right)$ yields
		\begin{equation*}
			\lambda_{\max}\left(\nabla^{2} g\left(x\right)\right) \leq \sum_{i = 1}^{m} \left(3\norm{A_{i}}^{2}\norm{x}^{2} + \norm{A_{i}}\left|b_{i}\right|\right) \leq L\left(\norm{x}_{2}^{2} + 1\right) \leq L \lambda_{\min}\left(\nabla^{2} h\left(x\right)\right),
		\end{equation*}
 		which proves the desired result.
	\end{proof}	
 	In order to apply our results (\cf Section \ref{ss:global}) for both scenarios (a) and (b), we observe   that $h$ given above is $1$-strongly convex on $\real^{d}$ and it easy to see that Assumptions\ref{A:AssumptionA}-\ref{A:AssumptionD} hold. Furthermore, the function $g$ is a real polynomial, hence semi-algebraic, and both functions $\norm{x}_{0}$ and $\norm{x}_{1}$ is also semi-algebraic (see, \cite[Appendix 5, p. 490]{BST2014}). Therefore, since the addition of semi-algebraic functions results in a  semi-algebraic function, it follows that for both models (a) and (b), the objective $\Psi$ is semi-algebraic, and BPG can be applied on the (QIP) model in each case, to produce a globally convergent sequence which converges to a critical point of $\Psi$.
\medskip

	To apply BPG, the main computational step require us to compute the Bregman proximal gradient map (see \eqref{D:OperT}):
	\begin{equation*}
		T_{\lambda}\left(x\right) = \argmin \left\{ f\left(u\right) + \act{\nabla g\left(x\right) , u - x} + \frac{1}{\lambda} D_{h}\left(u , x\right) : \, u \in \real^{d} \right\} \; (\lambda >0).
	\end{equation*}
	We now show that for both scenarios (a) and (b), this step yields an {\em explicit} closed form formula.
\medskip

	Before doing so, we introduce some convenient notations and recall some well known operators that will be used in the rest of this section. Let $\lambda > 0$ and fix any $x \in \real^{d}$. Define
	\begin{equation}\label{pl}
 		p \equiv p_{\lambda}\left(x\right) = \lambda \nabla g\left(x\right) - \nabla h\left(x\right) \quad (\text{for simplicity we often drop}\; \lambda , x).
 	\end{equation}
 	For the pair $(g , h)$ given above, a direct computation of their gradients gives $p_{\lambda}\left(x\right)$. Now, omitting constant terms in the above expression defining $T_{\lambda}$, we can write
	\begin{equation}\label{simpT}
		T_{\lambda}\left(x\right) = \argmin \left\{ \lambda f\left(u\right) + \act{p_{\lambda}\left(x\right) , u} + h\left(u\right) : \; u \in \real^{d} \right\}.
	\end{equation}
	We now recall two very well-known operators that will be used to compute the resulting $T_{\lambda}$.
   	\begin{itemize}
 		\item[] {\bf Soft-thresholding (with parameter $\tau$).} For any $y \in \real^{d}$,
			\begin{equation} \label{eq:soft}
				\SSS_{\tau}\left(y\right) = \argmin_{x \in \real^{d}} \left\{ \tau\norm{x}_{1} + \frac{1}{2}\norm{x - y}^{2} \right\} = \max\left\{ \left|y\right| - \tau , 0 \right\}\sgn(y),
			\end{equation} 		  		   				
			with the absolute value understood to be component-wise.
 		\item[] {\bf Hard-thresholding (with parameter $\tau$).} For any $y\in \real^{d}$,
			\begin{equation} \label{eq:hard}
				\HHH_{\tau}\left(y\right) = \argmin_{x \in \real^{d}} \left\{ \norm{x - y}^{2} : \; x \in \mathbb{B}_0^\tau \right\}
				= \begin{cases}
					y_i, & i\leq \tau,\\
					0, & {\rm otherwise,}
				\end{cases}
		\end{equation} 		
		where we assumed, without the loss of generality, that the vector $y$ is sorted in a decreasing order according to the absolute value of its entries (ties may be broken arbitrarily), \ie $|y_{1}| \geq |y_{2}| \geq \cdots \geq |y_{n}|$.						   		 		   	
   	\end{itemize}
	We are now ready to establish the promised formula of $T_{\lambda}$ for the two cases (a) and (b).
	\begin{proposition}[Bregman Proximal Formula for the $\ell_{1}$-Norm Regularization] \label{prop:prox_comp_l1}
		Let $f = \norm{\cdot}_{1}$, and for $x \in \real^{d}$, let $v(x) := \SSS_{\lambda\theta}\left(p_{\lambda}\left(x\right)\right)$. Then, $x^{+} = T_{\lambda}\left(x\right)$ is given by
		\begin{equation*}
			x^{+} = - t^{\ast}v(x) = t^{\ast}\SSS_{\lambda\theta}\left(\nabla h\left(x\right) - \lambda\nabla g\left(x\right)\right),
		\end{equation*}
		where $t^{\ast}$ is the unique positive real root of
		\begin{equation*}
			t^{3}\norm{v(x)}^{2}_{2} + t - 1 = 0,
		\end{equation*}
		which admits an explicit formula.\footnote{The general solution of a cubic equation was apparently discovered first by Scipione del Ferro and published by Gerolamo Cardano in his {\em Ars magna} in 1545, see \url{https://www.britannica.com/topic/cubic-equation}}.
	\end{proposition}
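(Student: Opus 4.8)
The plan is to reduce the vector minimization in \eqref{simpT} to a single scalar equation by exploiting the fact that $h$ depends on $u$ only through $\norm{u}_2$. First I would record that in case (a) the objective $\phi(u) := \lambda\theta\norm{u}_1 + \act{p , u} + h(u)$ (here the nonsmooth part of $\Psi$ is $\theta\norm{\cdot}_1$), with $p = p_{\lambda}(x)$ as in \eqref{pl}, is strictly convex and coercive on $\real^{d}$: the terms $\lambda\theta\norm{u}_1 + \act{p,u}$ are convex, while $h$ is $1$-strongly convex. Hence $T_{\lambda}(x)$ is single-valued (consistent with Remark \ref{r:convexf}), and its unique element $x^{+}$ is characterized by Fermat's rule. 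Using $\nabla h(u) = (\norm{u}_2^2 + 1)u$, this optimality condition reads
\[
0 \in \lambda\theta\,\partial\norm{u}_1 + p + (\norm{u}_2^2 + 1)u .
\]

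The key step is to treat the scalar $\alpha := \norm{u}_2^2 + 1 > 0$ as a fixed quantity and recognize a soft-thresholding problem. Dividing the inclusion by $\alpha$ gives $-p/\alpha \in u + (\lambda\theta/\alpha)\,\partial\norm{u}_1$, which is exactly the optimality condition defining $\SSS_{\lambda\theta/\alpha}(-p/\alpha)$ in \eqref{eq:soft}. Invoking the elementary scaling identity $\SSS_{\tau/\alpha}(y/\alpha) = \alpha^{-1}\SSS_{\tau}(y)$ valid for $\alpha > 0$, together with the oddness $\SSS_{\tau}(-y) = -\SSS_{\tau}(y)$, the dependence on $\alpha$ decouples from the direction:
\[
u = \SSS_{\lambda\theta/\alpha}\!\left(-p/\alpha\right) = \frac{1}{\alpha}\,\SSS_{\lambda\theta}(-p) = -\frac{1}{\alpha}\,\SSS_{\lambda\theta}(p) = -\frac{1}{\alpha}\,v(x).
\]
Setting $t := 1/\alpha > 0$, this is precisely $x^{+} = -t\,v(x) = t\,\SSS_{\lambda\theta}\!\left(\nabla h(x) - \lambda\nabla g(x)\right)$, since $\SSS_{\lambda\theta}(\nabla h(x) - \lambda\nabla g(x)) = \SSS_{\lambda\theta}(-p) = -v(x)$.

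It remains to pin down $t$ by closing the self-referential loop. From $u = -\tfrac{1}{\alpha}v(x)$ we get $\norm{u}_2^2 = \norm{v(x)}_2^2/\alpha^2$; substituting into the defining relation $\alpha = \norm{u}_2^2 + 1$ yields $\alpha^{3} - \alpha^{2} = \norm{v(x)}_2^2$, i.e., with $t = 1/\alpha$,
\[
t^{3}\norm{v(x)}_2^2 + t - 1 = 0 .
\]
To see that this cubic has a unique positive root, I would set $q(t) := t^{3}\norm{v(x)}_2^2 + t - 1$ and note $q'(t) = 3t^{2}\norm{v(x)}_2^2 + 1 > 0$, so $q$ is strictly increasing on $\real$; since $q(0) = -1 < 0$, there is exactly one real root, and it is positive. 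The explicit Cardano formula then furnishes the claimed closed form.

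The main obstacle, and the crux of the argument, is the self-referential coupling: the effective thresholding parameter $\lambda\theta/\alpha$ and the rescaling $1/\alpha$ both depend on $\alpha = \norm{u}_2^2 + 1$, which in turn depends on the unknown minimizer $u$. The decisive observation that unlocks this is that the scaling and oddness properties of $\SSS$ make the \emph{direction} of $u$ independent of $\alpha$ (it is always $-v(x)$), so only the scalar magnitude remains undetermined and is fixed by the scalar cubic. Finally, since every minimizer necessarily has the form $-t\,v(x)$ with $t$ a positive root of the cubic, and since both the minimizer and the positive root are unique, the candidate built from $t^{\ast}$ is indeed $T_{\lambda}(x)$.
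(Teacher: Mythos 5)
Your proposal is correct and follows essentially the same route as the paper's proof: both start from the first-order optimality condition of the strongly convex problem \eqref{eq:bregman_prox}, identify the direction of the minimizer as (minus) the soft-threshold $\SSS_{\lambda\theta}\left(p\right)$, and pin down its magnitude as the unique positive root of the cubic $t^{3}\norm{v}_{2}^{2} + t - 1 = 0$. The only difference is in how the soft-threshold structure is recognized: the paper introduces subgradient coordinates $\gamma_{i}$ and runs a componentwise case analysis on $v_{i} = p_{i} + \lambda\theta\gamma_{i}$, whereas you freeze the scalar $\alpha = \norm{u}_{2}^{2} + 1$ and invoke the scaling identity $\SSS_{\tau/\alpha}\left(y/\alpha\right) = \alpha^{-1}\SSS_{\tau}\left(y\right)$ together with oddness of $\SSS_{\tau}$ --- a clean vector-level execution of the same decoupling.
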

	\begin{proof}
		From the formulation of $T_{\lambda}$ given in \eqref{simpT}, we have
		\begin{equation} \label{eq:bregman_prox}
			x^{+} = \argmin \left\{ \lambda\theta\norm{u}_{1} + \act{p , u} + \frac{1}{4}\norm{u}_{2}^{4} + \frac{1}{2}\norm{u}_{2}^{2} : \; u \in \real^{d} \right\},
		\end{equation}		
		where we recall that $p = \lambda\nabla g\left(x\right) - \nabla h\left(x\right)$. By the first order global optimality condition for the strongly convex problem \eqref{eq:bregman_prox} we obtain for each $i = 1 , 2 , \ldots , d$,
		\begin{equation*}
			x_{i}^{+}\left(1 + \norm{x^{+}}_{2}^{2}\right) + p_{i} + \lambda\theta\gamma_{i} = 0,
		\end{equation*}
		where $\gamma_{i} = \sgn(x_{i}^{+})$ if $x_{i}^{+} \neq 0$, and $\gamma_{i} \in \left[-1 , 1\right]$ if $x_{i}^{+} = 0$. For each $i = 1 , 2 , \ldots , d$, set $v_{i} = p_{i} + \lambda\theta\gamma_{i}$. Then,  from the above equation, $x_{i}^{+} = -tv_{i}$ with $t > 0$, and
		\begin{equation*}
			v_{i}\left(t^{3}\norm{v}_{2}^{2} + t - 1\right) = 0, \; i = 1 , 2 , \ldots , d.
		\end{equation*}
		Now consider the following two cases for any $i = 1 , 2 , \ldots , d$:
		\begin{itemize}
			\item If $v_{i} = 0$, then $x_{i}^{+} = 0$, and $\gamma_{i} \in \left[-1 , 1\right]$, \ie we obtain $x_{i}^{+} = 0$ whenever $\left|p_{i}\right| \leq \lambda\theta$.
			\item If $v_{i} \neq 0$, then $x_{i}^{+} \neq 0$, and $\gamma_{i} = \sgn(x_{i}^{+}) = \sgn(-t v_{i}) = -\sgn(v_{i})$. Therefore, using $\gamma_{i} = \left(v_{i} - p_{i}\right)/\left(\lambda\theta\right)$, the latter relation reads $\lambda\theta\sgn(v_{i}) + v_{i} - p_{i} = 0$, which is nothing else but the following optimality condition for $v$:
				\begin{equation*}
					v = \argmin \left\{ \lambda\theta\norm{\xi}_{1} + \frac{1}{2}\norm{\xi - p}_{2}^{2} : \; \xi \in \real^{d} \right\} \equiv \SSS_{\lambda\theta}\left(p\right).
				\end{equation*}
				where the last equality is from the definition of the soft threshold operator (see \eqref{eq:soft}.
		\end{itemize}
		To summarize, in all cases, we have thus obtained that $x^{+} = - t^{\ast}\SSS_{\lambda\theta}\left(p\right)$, where $t^{\ast}$ is the positive root of the cubic equation $t^{3}\norm{v}_{2}^{2} + t - 1 = 0$, which is given by a closed form expression.
	\end{proof}
	Now, we turn to the $\ell_{0}$-norm constrained version. First we recall the following useful result established in  \cite[Proposition 4.3 p. 79]{luss2013conditional}.
	\begin{lemma}\label{l:trunc}
		Given $0 \neq a \in \real^{d}$ and a positive integer $s < d$, then
		\begin{equation*}
			\max \left\{ \act{a , z} : \; \norm{z}_{2} = 1, \; \norm{z}_{0} \leq s \right\} = \norm{\HHH_{s}\left(a\right)}_{2},
		\end{equation*}
		with optimal solution obtained at $z^{\ast} = \HHH_{s}\left(a\right)/\norm{\HHH_{s}\left(a\right)}_{2}$.
	\end{lemma}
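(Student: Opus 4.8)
The plan is to decompose the constrained maximization into two nested problems: an inner maximization over unit-norm vectors with a \emph{fixed} support, followed by an outer maximization over the choice of that support. Any feasible $z$ has a support $S := \mathrm{supp}(z)$ with $|S| \leq s$, so the original maximum equals $\max_{|S| \leq s} \max\{ \act{a,z} : \norm{z}_2 = 1, \; \mathrm{supp}(z) \subseteq S \}$.

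First I would solve the inner problem for a fixed admissible $S$. Writing $a_S$ and $z_S$ for the restrictions of $a$ and $z$ to the coordinates indexed by $S$, and noting that $\norm{z_S}_2 = \norm{z}_2 = 1$ since $z$ vanishes off $S$, the Cauchy--Schwarz inequality yields $\act{a,z} = \act{a_S , z_S} \leq \norm{a_S}_2 \norm{z_S}_2 = \norm{a_S}_2$, with equality attained at $z_S = a_S / \norm{a_S}_2$ whenever $a_S \neq 0$. Hence the inner maximum is exactly $\norm{a_S}_2$.

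Next I would carry out the outer maximization of $\norm{a_S}_2 = \left( \sum_{i \in S} a_i^2 \right)^{1/2}$ over all $S$ with $|S| \leq s$. Because each summand $a_i^2$ is nonnegative, the objective is nondecreasing under enlarging $S$, so an optimal set has exactly $s$ elements and, by a standard greedy/rearrangement argument, must consist of the indices carrying the $s$ largest values $a_i^2$, equivalently the $s$ largest magnitudes $|a_i|$. This is precisely the set of coordinates retained by the hard-thresholding operator $\HHH_s$ (see \eqref{eq:hard}), so the outer maximum equals $\norm{\HHH_s(a)}_2$, attained at $z^* = \HHH_s(a)/\norm{\HHH_s(a)}_2$; the assumption $a \neq 0$ guarantees $\HHH_s(a) \neq 0$, so this normalization is well-defined.

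I do not expect a genuine obstacle here: the only point requiring care is the outer combinatorial step, where I must justify that selecting the $s$ largest-magnitude coordinates is optimal (a monotonicity argument for sums of the largest nonnegative terms). I would also remark that ties among the $|a_i|$ render the maximizer $z^*$ nonunique while leaving the optimal \emph{value} unchanged, consistent with the arbitrary tie-breaking built into the definition of $\HHH_s$.
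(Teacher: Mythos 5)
Your proof is correct. Note, though, that the paper does not prove this lemma at all: it simply recalls it as a known result, citing Proposition 4.3 of Luss and Teboulle \cite{luss2013conditional}, so your argument supplies what the paper delegates to an external reference. The route you take is the natural and standard one for results of this kind: decompose the feasible set by support, settle the inner problem on a fixed support $S$ by Cauchy--Schwarz (giving the value $\norm{a_S}_2$, attained at $a_S/\norm{a_S}_2$ when $a_S \neq 0$), and then settle the outer combinatorial problem by observing that $\sum_{i \in S} a_i^2$ is maximized over $|S| \leq s$ by taking the indices of the $s$ largest magnitudes, which are precisely the coordinates retained by $\HHH_s$ in \eqref{eq:hard}. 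Your two points of care are well placed: $a \neq 0$ together with $s \geq 1$ guarantees $\HHH_s(a) \neq 0$, so the normalization defining $z^{\ast}$ is legitimate, and ties among the $|a_i|$ affect only the uniqueness of the maximizer, not the optimal value, consistently with the arbitrary tie-breaking in the definition of $\HHH_s$. The one step you leave informal, the outer greedy step, is indeed a one-line exchange/monotonicity argument (any sum of at most $s$ of the nonnegative numbers $a_i^2$ is bounded by the sum of the $s$ largest), so there is no gap. What your write-up buys is a self-contained application section; what the paper's citation buys is brevity, at the cost of sending the reader elsewhere for an argument that, as you show, takes only a few lines.
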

	\begin{proposition}[Bregman Proximal Formula for the $\ell_{0}$-Ball Constraint] \label{prop:prox_comp_l0}
		Let $f = \delta_{\mathbb{B}_{0}^{s}}$ and $x \in \real^{d}$. Then, $x^{+} = T_{\lambda}\left(x\right)$ is given by
		\begin{equation*}
			x^{+} = \sqrt{t^{\ast}}\norm{\HHH_{s}\left(p_{\lambda}\left(x\right)\right)}_{2}^{-1}\HHH_{s}\left(p_{\lambda}\left(x\right)\right),
		\end{equation*}
		where $\sqrt{t^{\ast}} \equiv \eta^{\ast}$ is the unique nonnegative real root of the cubic equation
		\begin{equation} \label{eq:cubic_l0}
			\eta^{3} + \eta - \norm{\HHH_{s}\left(p_{\lambda}\left(x\right)\right)}_{2} = 0.
		\end{equation}
	\end{proposition}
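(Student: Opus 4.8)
The plan is to exploit that the objective in \eqref{simpT} depends on $u$ only through the linear term $\act{p_{\lambda}\left(x\right) , u}$ and the Euclidean norm $\norm{u}_{2}$, which decouples the choice of \emph{direction} from the choice of \emph{length}. Writing $p = p_{\lambda}\left(x\right)$ and using that $\theta\,\delta_{\mathbb{B}_{0}^{s}} = \delta_{\mathbb{B}_{0}^{s}}$, the map reduces to
\begin{equation*}
x^{+} = \argmin \left\{ \act{p , u} + \frac{1}{4}\norm{u}_{2}^{4} + \frac{1}{2}\norm{u}_{2}^{2} : \; \norm{u}_{0} \leq s \right\}.
\end{equation*}
First I would parametrize any feasible $u$ as $u = \eta z$ with $\eta = \norm{u}_{2} \geq 0$, $\norm{z}_{2} = 1$ and $\norm{z}_{0} \leq s$ (the origin corresponding to $\eta = 0$), turning the objective into $\eta\act{p , z} + \tfrac{1}{4}\eta^{4} + \tfrac{1}{2}\eta^{2}$.

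For each fixed $\eta \geq 0$, the inner minimization over $z$ reduces to minimizing the linear form $\act{p , z}$ over $\left\{ \norm{z}_{2} = 1, \; \norm{z}_{0} \leq s \right\}$. This is exactly where Lemma \ref{l:trunc} does the work: applying it to $a = -p$ gives the optimal value $-\norm{\HHH_{s}\left(p\right)}_{2}$, attained at a normalized hard-thresholded vector (using $\norm{\HHH_{s}\left(-p\right)}_{2} = \norm{\HHH_{s}\left(p\right)}_{2}$). Substituting this value collapses the whole problem to the scalar minimization
\begin{equation*}
\min_{\eta \geq 0} \; \varphi\left(\eta\right), \qquad \varphi\left(\eta\right) := \frac{1}{4}\eta^{4} + \frac{1}{2}\eta^{2} - \eta\,\norm{\HHH_{s}\left(p\right)}_{2}.
\end{equation*}

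It then remains to solve this one-dimensional problem. Since $\varphi$ is coercive and strictly convex on $[0 , \infty)$ (as $\varphi''\left(\eta\right) = 3\eta^{2} + 1 > 0$), its unique minimizer is characterized by the stationarity condition $\varphi'\left(\eta\right) = \eta^{3} + \eta - \norm{\HHH_{s}\left(p\right)}_{2} = 0$, which is precisely the cubic \eqref{eq:cubic_l0}; because $\eta \mapsto \eta^{3} + \eta$ increases strictly from $0$ to $+\infty$ on $[0 , \infty)$, this equation admits a unique nonnegative root $\eta^{\ast} = \sqrt{t^{\ast}}$. Recombining this optimal magnitude with the optimal direction furnished by Lemma \ref{l:trunc} assembles the announced closed form $x^{+} = \sqrt{t^{\ast}}\,\norm{\HHH_{s}\left(p_{\lambda}\left(x\right)\right)}_{2}^{-1}\HHH_{s}\left(p_{\lambda}\left(x\right)\right)$.

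I expect the only genuine subtlety to lie in the degenerate case $\HHH_{s}\left(p\right) = 0$, where the optimal direction is undefined; there the cubic forces $\eta^{\ast} = 0$ and hence $x^{+} = 0$, which is consistent with the formula. The main conceptual point---and the reason the combinatorial, nonconvex $\ell_{0}$ constraint creates no real difficulty---is the radial reduction, which replaces the support selection by a single application of Lemma \ref{l:trunc} and leaves behind a transparent strictly convex univariate problem.
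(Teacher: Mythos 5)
Your proposal is correct and follows essentially the same route as the paper: a radial decomposition separating direction from magnitude, with Lemma \ref{l:trunc} solving the inner directional problem and the remaining univariate problem yielding the cubic \eqref{eq:cubic_l0}. The only (cosmetic) difference is that you parametrize by $\eta = \norm{u}_{2}$, making the scalar objective a strictly convex polynomial, whereas the paper uses $t = \norm{u}_{2}^{2}$ and argues via strong convexity of $\psi(t) = -\norm{\HHH_{s}(p)}_{2}\sqrt{t} + \tfrac{1}{4}t^{2} + \tfrac{1}{2}t$; both handle the degenerate case $\HHH_{s}(p) = 0$ identically.
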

	\begin{proof}
		We use the same notations as in Proposition \ref{prop:prox_comp_l1}. Here $x^{+} = T_{\lambda}\left(x\right)$ consists in  finding $x^{+}$ which solves the nonconvex problem:
		\begin{equation*}
			\min_{ u \in \real^{d}} \left\{ \act{p , u} + \frac{1}{4}\norm{u}_{2}^{4} + \frac{1}{2}\norm{u}_{2}^{2} : \; \norm{u}_{0} \leq s \right\}.
		\end{equation*}
		Introducing the new variable $t := \norm{u}_{2}^{2} \geq 0$, the latter is equivalent to solving
		\begin{equation} \label{eq:equiv_primal_l0}
			\min_{t \in \real_{+}} \left\{ \frac{1}{4}t^{2} + \frac{1}{2}t + \min_{u \in \real^{d}} \left\{ \act{p , u} : \; \norm{u}_{0} \leq s, \, \norm{u}_{2}^{2} = t \right\} \right\}.
		\end{equation}
		Invoking Lemma \ref{l:trunc} for the inner minimization with respect to $u$, one easily obtains:
		\begin{equation*}
			\min_{u \in \real^{d}} \left\{ \act{p , u} : \; \norm{u}_{0} \leq s, \; \norm{u}_{2}^{2} = t \right\} = -\sqrt{t}\HHH_{s}\left(-p\right)\equiv -\sqrt{t}\HHH_{s}\left(p\right),
		\end{equation*}
		with solution, $u^{\ast} = \sqrt{t}\norm{\HHH_{s}\left(p\right)}_{2}^{-1}\HHH_{s}\left(p\right)$ where we used the fact that for the hard threshold operator: $\HHH_{s}\left(p\right) = \HHH_{s}\left(-p\right)$). Therefore, problem \eqref{eq:equiv_primal_l0} reduces to solve
		\begin{equation} \label{eq:varphi_opt}
			\min_{t \in \real_{+}} \left\{ \psi\left(t\right) := -\norm{\HHH_{s}\left(p\right)}_{2}\sqrt{t} + \frac{1}{4}t^{2} + \frac{1}{2}t \right\},
		\end{equation}
		and it follows that $x^{+} = \sqrt{t^{\ast}}\norm{\HHH_{s}\left(p\right)}_{2}^{-1}\HHH_{s}\left(p\right)$, where $t^{\ast}$ is an optimal solution of \eqref{eq:varphi_opt}, thus proving the first part of the proposition. Now, if $\norm{\HHH_{s}\left(p\right)}_{2} = 0$ then clearly $t^{\ast} = 0$. Otherwise, with $\norm{\HHH_{s}\left(p\right)}_{2} > 0$, since $\lim_{t \rightarrow 0^{+}} \psi'\left(t\right) = -\infty$ and the above objective $\psi$ is strongly convex over $\real_{+}$, the problem \eqref{eq:varphi_opt} admits a unique minimizer $t^{\ast} > 0$ which satisfies the optimality condition:
		\begin{equation*}
			\sqrt{t}\left(t + 1\right) = \norm{\HHH_{s}\left(p\right)}_{2}.
		\end{equation*}
		Thus, in either cases, the unique optimal solution $t^{\ast}$ for problem \eqref{eq:varphi_opt} is the non-negative real root of the last equation, which can be rewritten as \eqref{eq:cubic_l0}, and hence the desired result is proved.
	\end{proof}
	Equipped with Lemma \ref{L:ConsL} together with Propositions \ref{prop:prox_comp_l1} and \ref{prop:prox_comp_l0}, thanks to Theorem \ref{T:GlobConv}, we have now all the ingredients to formulate simple explicit algorithms in terms of the problem's data for solving a broad class of sparse quadratic inverse problems (SQIP). A thorough computational study of (SQIP), and some other related extensions will be developed in a separate paper.

\section{Appendix: Global Convergence for KL Functions} \label{A:GlobConv}
	The objective of this appendix is twofold. First, to make the paper self-contained, and second to outline here the general convergence mechanism as recently described in \cite{BST2014} on the so-called PALM algorithm, so that it can be used and applied to any given algorithm as described below. To this end, let  $F : \real^{d} \rightarrow \erl$  be a proper and lower semicontinuous function which is bounded from below and consider the problem
    \begin{equation*}
        \inf \left\{ F\left(x\right) : \; x \in \real^{d} \right\}.
    \end{equation*}
	Consider a generic algorithm $\AAA$ which generates a sequence $\Seq{x}{k}$ via the following:
    \begin{equation*}
   		\text{start with any}\, x^{0} \in \real^{d} \,\, \text{and set} \,\, x^{k + 1} \in \AAA\left(x^{k}\right), \quad k = 0 , 1 , \ldots.
    \end{equation*}
    The main goal is to prove that the {\em whole sequence} $\Seq{x}{k}$, generated by the algorithm $\AAA$, converges to a critical point of $F$.
For that purpose we first outline the three key ingredients of the forthcoming methodology.\footnote{To keep this appendix as a completely independent unit, we repeat here a definition and results already stated in Section 4.}
	\begin{definition}[Gradient-like descent sequence] \label{def-descent}
		A sequence $\Seq{x}{k}$ is called \textit{a gradient-like descent sequence} for $F$ if the following three conditions hold:
		\begin{itemize}
        		\item[$\rm{(C1)}$] \textit{Sufficient decrease property.} There exists a positive scalar $\rho_{1}$ such that
            		\begin{equation*}
                		\rho_{1}\norm{x^{k + 1} - x^{k}}^{2} \leq F(x^{k}) - F(x^{k + 1}), \quad \forall \,\, k \in \nn.
	            \end{equation*}
    			\item[$\rm{(C2)}$] \textit{A subgradient lower bound for the iterates gap.} There exists a positive scalar $\rho_{2}$ such that         	
		   		\begin{equation*}
	    			\norm{w^{k + 1}} \leq \rho_{2}\norm{x^{k + 1} - x^{k}}, \quad w^{k + 1} \in \partial F(x^{k + 1}), \quad \forall \,\, k \in \nn.
	            \end{equation*}
			\item[$\rm{(C3)}$] Let $\overline{x}$ be a limit point of a subsequence $\left\{ x^{k} \right\}_{k \in {\cal K}}$, then $\limsup_{k \in {\cal K} \subset \nn} F\left(x^{k}\right) \leq F\left(\overline{x}\right)$.
        \end{itemize}	
	\end{definition}
	The two conditions (C1) and (C2) are typical for any descent type algorithm (see, \eg \cite{AB2009}), and basic to prove subsequential convergence. The condition (C3) is a minimal and weak requirement, which, in particular, holds when $F$ is continuous.
\medskip

	The set of all limit points of $\Seq{x}{k}$ is defined by
    \begin{equation*}
    		\omega\left(x^{0}\right) := \left\{ \overline{x} \in \real^{d}: \; \exists \mbox{ an increasing sequence of integers } \seq{k}{l} \mbox{ such that }\; x^{k_{l}} \rightarrow \overline{x} \mbox{ as } l \rightarrow \infty \right\}.
    \end{equation*}	
	The next result establish the promised subsequential convergence.
	\begin{lemma}[Subsequence Convergence] \label{L:SubConv1}
		Let $\Seq{x}{k}$ be a gradient-like descent sequence for $F$ which is assumed to be bounded. Then, $\omega\left(x^{0}\right)$ is a nonempty and compact subset of $\crit F$, and we have
		\begin{equation} \label{L:SubConv:1}
			\limit{k}{\infty} \dist\left(x^{k} , \omega\left(x^{0}\right)\right) = 0.
		\end{equation}
		In addition, the objective function $F$ is finite and constant on $\omega\left(x^{0}\right)$.
	\end{lemma}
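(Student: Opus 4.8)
The plan is to establish the three assertions—nonemptiness and compactness of $\omega(x^0)$ with $\omega(x^0)\subset\crit F$, the distance convergence \eqref{L:SubConv:1}, and the constancy of $F$ on $\omega(x^0)$—by exploiting the monotonicity and summability forced by (C1), the subgradient bound (C2), and the weak semicontinuity (C3). First I would note that (C1) makes $\{F(x^k)\}$ nonincreasing, and since $F$ is bounded below it converges to some finite limit $F_\ast$. Summing (C1) over $k$ then yields $\sum_k \norm{x^{k+1}-x^k}^2<\infty$, so in particular $\norm{x^{k+1}-x^k}\to 0$; this consecutive-gap-vanishing fact is the workhorse for everything that follows.

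\emph{Nonemptiness and the distance limit.} Since $\Seq{x}{k}$ is bounded, Bolzano–Weierstrass guarantees at least one convergent subsequence, so $\omega(x^0)\neq\emptyset$; boundedness and the closedness of the limit-point set give that $\omega(x^0)$ is compact. For \eqref{L:SubConv:1} I would argue by contradiction: if $\dist(x^k,\omega(x^0))\not\to 0$, then some subsequence stays a fixed distance $\varepsilon>0$ away from $\omega(x^0)$, yet by boundedness that subsequence has a further convergent sub-subsequence whose limit lies in $\omega(x^0)$—contradicting the $\varepsilon$-separation. Hence the distance tends to zero.

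\emph{Criticality of limit points and constancy of $F$.} Let $\overline{x}\in\omega(x^0)$, say $x^{k_l}\to\overline{x}$. Using $\norm{x^{k}-x^{k-1}}\to 0$, the neighboring subsequence $x^{k_l-1}\to\overline{x}$ as well. Condition (C3) gives $\limsup_l F(x^{k_l})\le F(\overline{x})$, while lower semicontinuity of $F$ gives $F(\overline{x})\le\liminf_l F(x^{k_l})$; combined with convergence of the full sequence $F(x^k)\to F_\ast$, this forces $F(\overline{x})=F_\ast$ for \emph{every} $\overline{x}\in\omega(x^0)$, proving $F$ is finite and constant there. For criticality, take $w^{k_l}\in\partial F(x^{k_l})$ from (C2); then $\norm{w^{k_l}}\le\rho_2\norm{x^{k_l}-x^{k_l-1}}\to 0$, so $w^{k_l}\to 0$. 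Since $x^{k_l}\to\overline{x}$, $F(x^{k_l})\to F(\overline{x})$, and $w^{k_l}\to 0$ with $w^{k_l}\in\partial F(x^{k_l})$, the closedness property of the limiting subdifferential (the graph of $\partial F$ is closed for proper lsc $F$) yields $0\in\partial F(\overline{x})$, i.e. $\overline{x}\in\crit F$.

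\textbf{The main obstacle} I expect is the passage to the limit in the subdifferential: one must invoke the closedness of $\gr\partial F$ together with the convergence $F(x^{k_l})\to F(\overline{x})$, and it is precisely to secure this \emph{function-value} convergence (beyond mere $x^{k_l}\to\overline{x}$) that condition (C3) is indispensable—pure lower semicontinuity alone does not supply the matching upper bound needed to apply the closedness of the limiting subdifferential. Everything else is routine once the telescoping of (C1) and the vanishing of consecutive gaps are in hand.
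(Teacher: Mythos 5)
Your proposal is correct and follows essentially the same route as the paper's proof: boundedness gives nonemptiness and compactness of $\omega(x^0)$, condition (C3) combined with lower semicontinuity yields convergence of function values along subsequences, conditions (C1)--(C2) produce subgradients $w^k \to 0$, and the closedness of $\gr \partial F$ (which, as you rightly stress, requires the function-value convergence and not just $x^{k_l} \to \overline{x}$) gives criticality, while monotone convergence of $\{F(x^k)\}$ forces constancy on $\omega(x^0)$. If anything, your explicit contradiction argument for $\dist(x^k, \omega(x^0)) \to 0$ is more detailed than the paper, which asserts this limit with little justification.
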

	\begin{proof}
		Since $\Seq{x}{k}$ is bounded there is $x^{\ast} \in \real^{d}$ and a subsequence $\left\{ x^{k_{q}} \right\}_{q \in \nn}$ such that $x^{k_{q}} \rightarrow x^{\ast}$ as $q \rightarrow \infty$ and hence $\omega\left(x^{0}\right)$ is nonempty. Moreover, the set $\omega\left(x^{0}\right)$ is compact since it can be viewed as an intersection of compact sets. Now, from condition (C3) and the lower semicontinuity of $F$, we obtain
		\begin{equation} \label{L:SubConv:2}
       		\lim_{q \rightarrow \infty} F\left(x^{k_{q}}\right) = F\left(x^{\ast}\right).
      	\end{equation}
       	On the other hand, from conditions (C1) and (C2), we know that there is $w^{k} \in \partial F\left(x^{k}\right)$, $k \in \nn$, such that $w^{k} \rightarrow 0$ as $k \rightarrow \infty$. The closedness property\footnote{Let $\left\{ \left(x^{k} , u^{k}\right) \right\}_{k \in \nn}$ be a sequence in $\gr{\left(\partial F\right)}$ that converges to $\left(x , u\right)$ as $k \rightarrow \infty$. By the very definition of $\partial F\left(x\right)$, if $F(x^{k})$ converges to $F(x)$ as $k \rightarrow \infty$, then $\left(x , u\right) \in \gr{(\partial F)}$.} of $\partial F$ implies thus that $0 \in \partial F\left(x^{\ast}\right)$. This proves that $x^{\ast}$ is a critical point of $F$, and hence \eqref{L:SubConv:1} is valid.

		To complete the proof, let $\limit{k}{\infty} F\left(x^{k}\right) = l \in \real$. Then $\left\{ F\left(x^{k_{q}}\right) \right\}_{q \in \nn}$ converges to $l$ and from \eqref{L:SubConv:2} we have $F\left(x^{\ast}\right) = l$. Hence the restriction of $F$ to $\omega\left(x^{0}\right)$ equals $l$.
	\end{proof}
	To achieve our main goal, \ie to establish global convergence of the \textit{whole} sequence, we need an additional assumption on the class of functions $F$: it must satisfy the so-called nonsmooth Kurdyka-{\L}ojasiewicz (KL) property \cite{BDL2006}(see \cite{K1998,L1963} for smooth cases). We refer the reader to \cite{BDLM10} for an in depth study of the class of KL functions, as well as references therein. We provide now the formal definition of the KL property and two important results.
\medskip

	Denote $[\alpha < F < \beta] := \left\{ x \in \real^{d} : \; \alpha < F\left(x\right) < \beta \right\}$. Let $\eta > 0$, and set
	\begin{equation*}
		\Phi_{\eta} = \left\{ \varphi \in C^{0}[0 , \eta) \cap C^{1}(0 , \eta) : \; \varphi\left(0\right) = 0, \varphi \; \text{concave and} \; \varphi' > 0 \right\}.
	\end{equation*}
	\begin{definition}[The nonsmooth KL property] \label{def:kl}
		A proper and lower semicontinuous function $F : \real^{d} \rightarrow \erl$ has the Kurdyka-{\L}ojasiewicz (KL) property locally at $\overline{u} \in \dom F$ if there exist $\eta > 0$, $\varphi \in \Phi_{\eta}$, and a neighborhood $U\left(\overline{u}\right)$ such that
		\begin{equation*}
  			\varphi'\left(F\left(u\right) - F\left(\overline{u}\right)\right)\dist\left(0 , \partial F\left(u\right)\right) \geq 1,
        \end{equation*}
		for all $u \in U\left(\overline{u}\right) \cap \left[F\left(\overline{u}\right) < F\left(u\right) < F\left(\overline{u}\right) + \eta \right]$.
  	\end{definition}
	Verifying the KL property of a given function might often be a difficult task. However, thanks to a fundamental result established in \cite{BDL2006}, it holds for the broad class of \textit{semi-algebraic} functions.
    \begin{theorem} \label{P:Semi-aKL}
        Let $F : \real^{d} \rightarrow \erl$ be a proper and lower semicontinuous function. If $F$ is semi-algebraic then it satisfies the KL property at any point of $\dom{F}$.
    \end{theorem}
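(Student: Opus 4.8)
The plan is to reduce the statement to the semi-algebraic \emph{{\L}ojasiewicz gradient inequality} for the limiting subdifferential, and then to produce a desingularizing function $\varphi$ explicitly as a power, which is the special form of $\Phi_\eta$ that semi-algebraicity forces. I work locally at a fixed $\bar u \in \dom F$ and, after subtracting the constant, assume $F(\bar u)=0$; the only interesting regime is that of points $u$ in a small ball $U(\bar u)=B(\bar u,\epsilon)$ with $0<F(u)<\eta$, matching the set appearing in Definition \ref{def:kl}.

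First I would record the two structural facts that drive everything. (1) The graph $\gr{\partial F}$ is semi-algebraic: the regular subdifferential is defined by a first-order liminf inequality, and the limiting subdifferential by an outer limit of regular subgradients, and both operations preserve membership in the semi-algebraic class via the Tarski--Seidenberg principle (stability under projections and Boolean combinations). (2) Hence the map $u \mapsto \dist(0,\partial F(u))$ is semi-algebraic, being the infimum of the semi-algebraic function $(u,u^{*})\mapsto \norm{u^{*}}$ over the semi-algebraic fibre $\{u^{*} : u^{*}\in\partial F(u)\}$, again a projection.

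The heart of the argument is a one-variable reduction. I define the \emph{valley function}
\[
  \theta(t) := \inf\{\dist(0,\partial F(u)) : u \in B(\bar u,\epsilon),\ F(u)=t\}, \qquad t \in (0,\eta),
\]
which by Tarski--Seidenberg is a semi-algebraic function of the single variable $t$. The monotonicity theorem for one-variable semi-algebraic functions then shows that, after shrinking $\eta$, $\theta$ is continuous and admits a Puiseux expansion at $0$, so $\theta(t)=c\,t^{\rho}(1+o(1))$ for some $c\ge 0$ and rational $\rho$. The crux --- and the step I expect to be the main obstacle --- is to show that $\theta$ does not vanish on $(0,\eta)$ and that $\rho \in [0,1)$; equivalently, to establish the {\L}ojasiewicz gradient inequality $\dist(0,\partial F(u)) \ge c\,(F(u))^{\rho}$ with $\rho<1$. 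This is genuine hard analysis: one rules out both $\theta\equiv 0$ and $\rho\ge 1$ using the curve selection lemma (selecting a semi-algebraic descent arc $s\mapsto u(s)$ along which $F(u(s))\downarrow 0$ while $\dist(0,\partial F(u(s)))$ stays small) together with an integration/chain-rule estimate along that arc, which would force $F$ to reach the critical level along a finite-length path --- contradicting $F(u(s))>0$ unless the exponent is strictly below $1$.

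Granting $\dist(0,\partial F(u)) \ge c\,(F(u))^{\rho}$ with $\rho\in[0,1)$ on the admissible set, I finish by writing the desingularizing function explicitly. Set $\varphi(s):=\tfrac{c^{-1}}{1-\rho}\,s^{1-\rho}$. Since $1-\rho\in(0,1]$, the function $\varphi$ is continuous on $[0,\eta)$, is $C^{1}$ and concave on $(0,\eta)$, and satisfies $\varphi(0)=0$ and $\varphi'(s)=c^{-1}s^{-\rho}>0$, so $\varphi\in\Phi_\eta$. Then for every $u\in U(\bar u)$ with $0<F(u)<\eta$,
\[
  \varphi'(F(u))\,\dist(0,\partial F(u)) \ \ge\ c^{-1}(F(u))^{-\rho}\cdot c\,(F(u))^{\rho} \ =\ 1,
\]
which is precisely the KL inequality. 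Since $\bar u\in\dom F$ was arbitrary, $F$ enjoys the KL property at every point of $\dom F$, as claimed.
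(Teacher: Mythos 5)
You should first be aware that the paper does not prove Theorem \ref{P:Semi-aKL} at all: it is invoked as a known, deep result of Bolte, Daniilidis and Lewis, cited from \cite{BDL2006}, so your attempt has to be measured against the actual proofs in that literature rather than against anything in this paper. Your scaffolding is sound and matches the standard strategy: semi-algebraicity of $\gr{\partial F}$ via Tarski--Seidenberg, semi-algebraicity of the one-variable valley function $\theta$, the monotonicity/Puiseux step, and the final construction of $\varphi(s)=\tfrac{c^{-1}}{1-\rho}s^{1-\rho}$ from a power-type inequality are all correct and routine. The difficulty is that the entire mathematical content of the theorem is concentrated in the step you explicitly defer, namely that $\theta$ is positive near $0$ and that its Puiseux exponent satisfies $\rho<1$; a proof whose central step is announced as ``the main obstacle'' and handled by a one-sentence plan is not a proof, and in this case the plan itself does not survive scrutiny in the nonsmooth setting.

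Concretely, two obstacles. First, positivity of $\theta$: for a merely lower semicontinuous $F$ the set $\{u : F(u)=t\}$ intersected with a closed ball need not be compact (it need not be closed), and $u\mapsto\dist\left(0,\partial F(u)\right)$ is not lower semicontinuous unless the approximating sequence is $F$-attentive (\ie $F(u_k)\to F(u)$, which lower semicontinuity alone does not give); hence a limit of a minimizing sequence at level $t$ need not be a critical point at level $t$, and the semi-algebraic Sard theorem does not by itself rule out $\theta\equiv 0$. Second, and more seriously, your curve-selection-plus-integration argument requires differentiating $F$ along a selected arc and relating $\frac{d}{ds}F(u(s))$ to $\dist\left(0,\partial F(u(s))\right)$; for an lsc extended-valued $F$ there is no chain rule along an arbitrary arc, and $F$ need not even be continuous along it. This is precisely where the genuine proofs (Bolte--Daniilidis--Lewis, and Bolte--Daniilidis--Lewis--Shiota) bring in a Whitney stratification of the graph and the projection formula, by which $\partial F(u)$ projects onto $\nabla(F|_{M})(u)$ for the stratum $M$ containing $u$, so that $\dist\left(0,\partial F(u)\right)\geq \norm{\nabla (F|_{M})(u)}$ and the nonsmooth inequality reduces, stratum by stratum, to the classical {\L}ojasiewicz gradient inequality for smooth semi-algebraic functions. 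Without that device (or an equivalent one), the key inequality $\dist\left(0,\partial F(u)\right)\geq c\left(F(u)\right)^{\rho}$ with $\rho<1$ remains asserted rather than proven, and your argument has a genuine gap at its core.
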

	Our last ingredient is a key uniformization of the KL property proven in \cite[Lemma 6, p. 478]{BST2014}, which we record below.
    \begin{lemma}[Uniformized KL property] \label{L:KLProperty}
        Let $\Omega$ be a compact set and let $F : \real^{d} \rightarrow \erl$ be a proper and lower semicontinuous function. Assume that $F$ is constant on $\Omega$ and satisfies the KL property at each point of $\Omega$. Then, there exist $\varepsilon > 0$, $\eta > 0$ and $\varphi \in \Phi_{\eta}$ such that for all $\overline{x}$ in $\Omega$  one has,
        \begin{equation} \label{L:KLProperty:2}
            \varphi'\left(F\left(x\right) - F\left(\overline{x}\right)\right)\dist\left(0 , \partial F\left(x\right)\right) \geq 1,
        \end{equation}
        and all $x \in  \left\{ x \in \real^{d} : \; \dist\left(x , \Omega\right) < \varepsilon \right\} \cap \left[ F\left(\overline{x}\right) < F\left(x\right) < F\left(\overline{x}\right) + \eta \right]$.
            \end{lemma}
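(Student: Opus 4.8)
The plan is to exploit the constancy of $F$ on $\Omega$ together with a compactness argument to pass from the pointwise KL data to uniform constants, the crucial ingredient being a device for merging finitely many desingularizing functions into a single one. Write $\zeta$ for the common value of $F$ on $\Omega$. For each $\overline{u} \in \Omega$, Definition \ref{def:kl} supplies a radius $\varepsilon_{\overline{u}} > 0$, a level $\eta_{\overline{u}} > 0$ and a function $\varphi_{\overline{u}} \in \Phi_{\eta_{\overline{u}}}$ such that the KL inequality $\varphi_{\overline{u}}'(F(x) - \zeta)\,\dist(0 , \partial F(x)) \geq 1$ holds on $B(\overline{u}, \varepsilon_{\overline{u}}) \cap [\zeta < F < \zeta + \eta_{\overline{u}}]$, where I have already used $F(\overline{u}) = \zeta$ to replace $F(\overline{u})$ by the common value.

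Next I would invoke compactness. The open balls $\{ B(\overline{u}, \varepsilon_{\overline{u}}/2) \}_{\overline{u} \in \Omega}$ cover $\Omega$, so I extract a finite subcover indexed by $\overline{u}_1, \ldots, \overline{u}_p$ and set $\eta := \min_{1 \le i \le p} \eta_{\overline{u}_i}$ and $\varepsilon := \tfrac{1}{2}\min_{1 \le i \le p} \varepsilon_{\overline{u}_i}$. A short triangle-inequality check then shows that every $x$ with $\dist(x, \Omega) < \varepsilon$ already lies in one of the \emph{full} balls $B(\overline{u}_i, \varepsilon_{\overline{u}_i})$: choose $\overline{x} \in \Omega$ with $\norm{x - \overline{x}} < \varepsilon$, locate the half-ball $B(\overline{u}_i, \varepsilon_{\overline{u}_i}/2)$ containing $\overline{x}$, and combine the two estimates.

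The heart of the argument is constructing a single $\varphi \in \Phi_\eta$ that dominates all the $\varphi_{\overline{u}_i}$. I would define $\varphi(s) := \int_0^s \max_{1 \le i \le p} \varphi_{\overline{u}_i}'(\tau)\, d\tau$ on $[0, \eta)$. Since each $\varphi_{\overline{u}_i}'$ is continuous and strictly positive there, the integrand is continuous and positive, so $\varphi$ is $C^1$ on $(0,\eta)$, continuous on $[0,\eta)$ with $\varphi(0)=0$ and $\varphi' > 0$; moreover each $\varphi_{\overline{u}_i}'$ is nonincreasing by concavity of $\varphi_{\overline{u}_i}$, and the pointwise maximum of finitely many nonincreasing functions is again nonincreasing, whence $\varphi' = \max_i \varphi_{\overline{u}_i}'$ is nonincreasing and $\varphi$ is concave. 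Thus $\varphi \in \Phi_\eta$. This max-\emph{of-derivatives} device is the step I expect to be the main obstacle: the naive candidate $\varphi = \max_i \varphi_{\overline{u}_i}$ need not be concave, and working at the level of derivatives is precisely what preserves all the structural properties of $\Phi_\eta$ simultaneously while guaranteeing the domination $\varphi' \ge \varphi_{\overline{u}_i}'$.

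It remains to verify the uniform inequality. I fix any $\overline{x} \in \Omega$ and any $x$ with $\dist(x,\Omega) < \varepsilon$ and $F(\overline{x}) = \zeta < F(x) < \zeta + \eta$. By the covering step $x \in B(\overline{u}_i, \varepsilon_{\overline{u}_i})$ for some $i$, and since $\eta \le \eta_{\overline{u}_i}$ we have $F(\overline{u}_i) < F(x) < F(\overline{u}_i) + \eta_{\overline{u}_i}$; the local KL inequality at $\overline{u}_i$ then yields $\varphi_{\overline{u}_i}'(F(x) - \zeta)\,\dist(0,\partial F(x)) \ge 1$. Because $\varphi'(s) = \max_j \varphi_{\overline{u}_j}'(s) \ge \varphi_{\overline{u}_i}'(s)$ for every $s \in (0,\eta)$, multiplying the distance term through gives $\varphi'(F(x)-F(\overline{x}))\,\dist(0,\partial F(x)) \ge 1$, which is exactly \eqref{L:KLProperty:2} with $\varepsilon$, $\eta$, $\varphi$ independent of $\overline{x}$, completing the proof.
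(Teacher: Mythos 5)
Your proof is correct, and its outline---pointwise KL data at each point of $\Omega$, a finite subcover by compactness, uniform $\varepsilon$ and $\eta$ by taking minima, and a single desingularizing function dominating all the local ones---is exactly the standard argument. Note, though, that the paper itself does not prove this lemma: it records it as a citation to \cite[Lemma 6, p. 478]{BST2014}. The proof given there follows the same covering scheme as yours but merges the finitely many local functions differently: it takes the sum $\varphi := \sum_{i=1}^{p} \varphi_{\overline{u}_i}$ (restricted to $[0,\eta)$), which lies in $\Phi_{\eta}$ for free---a finite sum of concave, continuous functions vanishing at zero with positive derivatives---and satisfies $\varphi' = \sum_{i}\varphi_{\overline{u}_i}' \geq \varphi_{\overline{u}_{i_0}}'$ termwise because all summands are positive. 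Your max-of-derivatives device $\varphi(s) = \int_0^s \max_i \varphi_{\overline{u}_i}'(\tau)\,d\tau$ also works and is a nice observation (you are right that $\max_i \varphi_{\overline{u}_i}$ itself need not be concave), but it carries one obligation you glossed over: the local derivatives $\varphi_{\overline{u}_i}'$ are typically unbounded as $\tau \to 0^{+}$ (e.g.\ $\varphi(s)=\sqrt{s}$), so finiteness of the integral and continuity of $\varphi$ at $0$ do not follow merely from ``the integrand is continuous and positive'' on $(0,\eta)$; they follow from the domination $\max_i \varphi_{\overline{u}_i}' \leq \sum_i \varphi_{\overline{u}_i}'$ together with $\int_0^s \varphi_{\overline{u}_i}'(\tau)\,d\tau = \varphi_{\overline{u}_i}(s) < \infty$. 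With that one-line addition your argument is complete; the summation used in the cited source is simply the shorter route to the same end.
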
	
	We can now conveniently summarize the convergence results of \cite{BST2014} through the following abstract convergence result.
	\begin{theorem}[Global convergence] \label{T:AbstrGlob}
		Let $\Seq{x}{k}$ be a bounded gradient-like descent sequence for $F$. If $F$ satisfies the KL property, then the sequence $\Seq{x}{k}$ has finite length, \ie $\sum_{k = 1}^{\infty} \norm{x^{k + 1} - x^{k}} < \infty$ and it converges to  $x^{\ast} \in \crit F$.
	\end{theorem}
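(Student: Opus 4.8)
The plan is to follow the now-classical {\L}ojasiewicz-type argument: combine the three descent conditions with the uniformized KL inequality to produce a \emph{summable} bound on the step lengths $\norm{x^{k+1}-x^k}$, which yields both finite length and, via the Cauchy criterion, convergence of the whole sequence. First I would invoke Lemma \ref{L:SubConv1}: since $\Seq{x}{k}$ is a bounded gradient-like descent sequence, the limit-point set $\Omega := \omega(x^0)$ is nonempty and compact, contained in $\crit F$, with $\dist(x^k,\Omega)\to 0$, and $F$ is constant on $\Omega$, say $F\equiv F_\ast$ there. Condition (C1) makes $\{F(x^k)\}$ nonincreasing, and together with subsequential convergence this forces $F(x^k)\downarrow F_\ast$. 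If $F(x^{k_0})=F_\ast$ for some finite $k_0$, then (C1) gives $\rho_1\norm{x^{k_0+1}-x^{k_0}}^2 \le F(x^{k_0})-F(x^{k_0+1})\le 0$, so $x^{k_0+1}=x^{k_0}$ and the sequence is eventually stationary, making finite length trivial. Hence I assume $F(x^k)>F_\ast$ for all $k$.

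Next I would apply the uniformized KL property (Lemma \ref{L:KLProperty}) to the compact set $\Omega$, obtaining $\varepsilon,\eta>0$ and $\varphi\in\Phi_\eta$ such that $\varphi'(F(x)-F_\ast)\,\dist(0,\partial F(x))\ge 1$ whenever $\dist(x,\Omega)<\varepsilon$ and $F_\ast<F(x)<F_\ast+\eta$. Because $\dist(x^k,\Omega)\to 0$ and $F(x^k)\downarrow F_\ast$, there is an index $K$ beyond which both conditions hold, so the KL inequality applies to every $x^k$ with $k\ge K$. Using the subgradient bound (C2), namely $\dist(0,\partial F(x^k))\le\norm{w^k}\le\rho_2\norm{x^k-x^{k-1}}$, this gives $\varphi'(F(x^k)-F_\ast)\ge 1/(\rho_2\norm{x^k-x^{k-1}})$.

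The crux of the argument, and the step I expect to be the main obstacle, is the telescoping estimate. Writing $\Delta_k:=\varphi(F(x^k)-F_\ast)$, concavity of $\varphi$ gives $\Delta_k-\Delta_{k+1}\ge\varphi'(F(x^k)-F_\ast)(F(x^k)-F(x^{k+1}))$; substituting the lower bound on $\varphi'$ and the sufficient-decrease bound (C1) $F(x^k)-F(x^{k+1})\ge\rho_1\norm{x^{k+1}-x^k}^2$ yields
\begin{equation*}
\norm{x^{k+1}-x^k}^2 \le \frac{\rho_2}{\rho_1}(\Delta_k-\Delta_{k+1})\,\norm{x^k-x^{k-1}}, \qquad k\ge K.
\end{equation*}
Taking square roots and applying the arithmetic-geometric mean inequality $\sqrt{uv}\le\tfrac12(u+v)$ with $u=\norm{x^k-x^{k-1}}$ and $v=(\rho_2/\rho_1)(\Delta_k-\Delta_{k+1})$ produces $2\norm{x^{k+1}-x^k}\le\norm{x^k-x^{k-1}}+(\rho_2/\rho_1)(\Delta_k-\Delta_{k+1})$. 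Summing from $K$ to $N$, the telescoping of both the step-length terms and the $\Delta_k$ terms leaves $\sum_{k=K}^{N}\norm{x^{k+1}-x^k}$ bounded by $\norm{x^K-x^{K-1}}+(\rho_2/\rho_1)\Delta_K$, uniformly in $N$. The delicate point is precisely balancing the mixed factor $\norm{x^k-x^{k-1}}$ through AM-GM so that it cancels under summation rather than accumulating.

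Finally, finite length $\sum_k\norm{x^{k+1}-x^k}<\infty$ implies $\Seq{x}{k}$ is Cauchy, hence convergent to some $x^\ast$; since the sequence converges, its limit-point set satisfies $\omega(x^0)=\{x^\ast\}$, and by Lemma \ref{L:SubConv1} we conclude $x^\ast\in\crit F$, which completes the argument.
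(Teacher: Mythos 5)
Your proposal is correct and follows essentially the same route as the paper's proof: the stationary-case dichotomy, the uniformized KL inequality on $\omega(x^0)$, the concavity estimate combined with (C1) and (C2), the AM--GM splitting $2\sqrt{\alpha\beta}\le\alpha+\beta$, and the telescoping summation yielding finite length and the Cauchy property. No gaps; the details (including the delicate cancellation of the mixed term $\norm{x^k-x^{k-1}}$ under summation) are handled exactly as in the paper.
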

    \begin{proof}
        Since $\Seq{x}{k}$ is bounded there exists a subsequence $\left\{ x^{k_{q}} \right\}_{q \in \nn}$ such that $x^{k_{q}} \rightarrow \overline{x}$ as $q \rightarrow \infty$. In a similar way as in Lemma \ref{L:SubConv} we get that
        \begin{equation} \label{T:AbstrGlob:1}
            \lim_{k \rightarrow \infty} F(x^{k}) = F\left(\overline{x}\right).
        \end{equation}
        If there exists an integer $\bar{k}$ for which $F(x^{\bar{k}}) = F\left(\overline{x}\right)$ then condition (C1) would imply that $x^{\bar{k} + 1} = x^{\bar{k}}$. A trivial induction show then that the sequence $\Seq{x}{k}$ is stationary and the announced results are obvious. Since $\left\{ F\left(x^{k}\right) \right\}_{k \in \nn}$ is a nonincreasing sequence, it is clear from \eqref{T:AbstrGlob:1} that $F\left(\overline{x}\right) < F\left(x^{k}\right)$ for all $k > 0$. Again from \eqref{T:AbstrGlob:1} for any $\eta > 0$ there exists a nonnegative integer $k_{0}$ such that $F\left(x^{k}\right) < F\left(\overline{x}\right) + \eta$ for all $k > k_{0}$. From Lemma \ref{L:SubConv} we know that $\lim_{k \rightarrow \infty} \dist\left(x^{k} , \omega\left(x^{0}\right)\right) = 0$. This means that for any $\varepsilon > 0$ there exists a positive integer $k_{1}$ such that $\dist\left(x^{k} , \omega\left(x^{0}\right)\right) < \varepsilon$ for all $k > k_{1}$.
\medskip

		From Lemma \ref{L:SubConv}, we know that $\omega\left(x^{0}\right)$ is nonempty and compact, the function $F$ is finite and constant on $\omega\left(x^{0}\right)$. Hence, we can apply the Uniformization Lemma with $\Omega = \omega\left(x^{0}\right)$. Therefore, for any $k > l := \max\left\{ k_{0} , k_{1} \right\}$, we have
       	\begin{equation} \label{T:FiniteLength-Item1:1}
        		\varphi'\left(F(x^{k}) - F(\overline{x})\right)\dist\left(0 , \partial F(x^{k})\right) \geq 1.
        \end{equation}
       	This makes sense since we know that $F\left(x^{k}\right) > F\left(\overline{x}\right)$ for any $k > l$. From condition (C2) we get that
  		\begin{equation} \label{T:FiniteLength-Item1:2}
        		\varphi'\left(F(x^{k}) - F\left(\overline{x}\right)\right) \geq \frac{1}{\rho_{2}}\norm{x^{k} - x^{k - 1}}^{-1}.
       	\end{equation}
        For convenience, we define for all $p , q \in \nn$ and $\overline{x}$ the following quantity
        \begin{equation*}
            	\Delta_{p , q} : = \varphi\left(F\left(x^{p}\right) - F\left(\overline{x}\right)\right) - \varphi\left(F\left(x^{q}\right) - F\left(\overline{x}\right)\right).
        \end{equation*}
      	From the concavity of $\varphi$ we get that
       	\begin{equation} \label{T:FiniteLength-Item1:3}
      		\Delta_{k , k + 1} \geq \varphi'\left(F(x^{k}) - F(\overline{x})\right)\left(F(x^{k}) - F(x^{k + 1})\right).
      	\end{equation}
      	Combining condition (C1) with \eqref{T:FiniteLength-Item1:2} and \eqref{T:FiniteLength-Item1:3} yields, for any $k > l$, that
       	\begin{equation} \label{T:FiniteLength-Item1:4}
        		\Delta_{k , k + 1} \geq \frac{\norm{z^{k + 1} - z^{k}}^{2}}{\rho\norm{z^{k} - z^{k - 1}}}, \quad \text{ where} \,\, \rho := \rho_{2}/\rho_{1}.
        \end{equation}
        Using the fact that $2\sqrt{\alpha\beta} \leq \alpha + \beta$ for all $\alpha , \beta \geq 0$, we infer from the later inequality that
       	\begin{equation} \label{T:FiniteLength-Item1:5}
        		2\norm{x^{k + 1} - x^{k}} \leq \norm{x^{k} - x^{k - 1}} + \rho\Delta_{k , k + 1}.
        \end{equation}
       	Summing up \eqref{T:FiniteLength-Item1:5} for $i = l + 1 , \ldots , k$ yields
		\begin{align*}
  			2\sum_{i = l + 1}^{k} \norm{x^{i + 1} - x^{i}} & \leq \sum_{i = l + 1}^{k} \norm{x^{i} - x^{i - 1}} + \rho\sum_{i = l + 1}^{k} \Delta_{i , i + 1} \\
           	& \leq \sum_{i = l + 1}^{k} \norm{x^{i + 1} - x^{i}} + \norm{x^{l + 1} - x^{l}} + \rho\sum_{i = l + 1}^{k} \Delta_{i , i + 1} \\
           	& = \sum_{i = l + 1}^{k} \norm{x^{i + 1} - x^{i}} + \norm{x^{l + 1} - x^{l}} + \rho\Delta_{l + 1 , k + 1}
       	\end{align*}
       	where the last inequality follows from the fact that $\Delta_{p , q} + \Delta_{q , r} = \Delta_{p , r}$ for all $p , q , r \in \nn$. Since $\varphi \geq 0$, recalling the definition of $\Delta_{l + 1 , k + 1}$, we thus have for any $k > l$ that
       	\begin{align*}
         	\sum_{i = l + 1}^{k} \norm{x^{i + 1} - x^{i}} \leq \norm{x^{l + 1} - x^{l}} + \rho\varphi\left(F(x^{l + 1}) - F\left(\overline{x}\right)\right),
        \end{align*}
        which implies that $\sum_{k = 1}^{\infty} \norm{x^{k + 1} - x^{k}} < \infty$, \ie $\Seq{x}{k}$ is a Cauchy sequence and hence together with Lemma~\ref{L:SubConv}, we obtain the global convergence to a critical point is established.
    \end{proof}
   	Finally, we record a generic rate of convergence result (see \cite{AB2009}) which is also valid under the same assumptions as Theorem \ref{T:AbstrGlob}.
    \begin{theorem}[Rate of convergence]\label{th:rate}
	    Let $\Seq{x}{k}$ be a bounded gradient-like descent sequence for $F$. Assume that $F$ satisfies the KL property where the desingularizing function $\varphi$ of $F$ is of the following form
	    \begin{equation*}
	    		\varphi\left(s\right) = cs^{1 - \theta}, \quad c > 0, \,\, \theta \in \left[0 , 1\right).
	    \end{equation*}
	    Let $\overline{x}$ be the limit point of $\Seq{x}{k}$. Then the following estimations hold:
    		\begin{itemize}
        		\item[$\rm{(i)}$] If $\theta = 0$ then the sequence $\Seq{x}{k}$ converges to $\overline{x}$ in a finite number of steps.
        		\item[$\rm{(ii)}$] If $\theta \in \left(0 , 1/2\right]$ then there exist $\omega > 0$ and $\tau \in \left[0 , 1\right)$ such that
				\begin{equation*}        	
        			\norm{x^{k} - \overline{x}} \leq \omega\, \tau^{k}.
        		\end{equation*}
        		\item[$\rm{(iii)}$] If $\theta \in \left(1/2 , 1\right)$ then there exist $\omega > 0$ such that
            	\begin{equation*}
                	\norm{x^{k} - \overline{x}} \leq \omega\, k^{-\frac{1 - \theta}{2\theta - 1}}.
            	\end{equation*}
    		\end{itemize}
	\end{theorem}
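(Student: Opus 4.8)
The plan is to follow the classical Attouch--Bolte scheme \cite{AB2009}, reusing the machinery already assembled in the proof of Theorem \ref{T:AbstrGlob}. Throughout I write $\Delta_k := F(x^k) - F(\overline{x}) \ge 0$ and denote by $S_k := \sum_{i = k}^{\infty} \norm{x^{i+1} - x^i}$ the tail length, which is finite by Theorem \ref{T:AbstrGlob}; since $\norm{x^k - \overline{x}} \le S_k$, it suffices to estimate $S_k$. I may assume the sequence is not eventually stationary, otherwise every claim is trivial. The starting point is the inequality \eqref{T:FiniteLength-Item1:5}: summing it from index $k$ to infinity and telescoping the $\Delta_{i , i+1}$ terms (using $\varphi(\Delta_i) \to \varphi(0) = 0$) yields, for all large $k$, a recursion of the form $2 S_k \le S_{k-1} + \rho c\, \Delta_k^{1-\theta}$, where I have inserted the explicit desingularizing function $\varphi(s) = c s^{1-\theta}$.

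Next I would convert the energy gap $\Delta_k$ into a step length. Plugging $\varphi'(s) = c(1-\theta) s^{-\theta}$ into the KL inequality of Definition \ref{def:kl} and bounding $\dist(0 , \partial F(x^k))$ from above via condition (C2), namely $\dist(0 , \partial F(x^k)) \le \rho_2 \norm{x^k - x^{k-1}} = \rho_2 (S_{k-1} - S_k)$, gives $\Delta_k^{\theta} \le c(1-\theta)\rho_2 (S_{k-1} - S_k)$. Writing $d_k := S_{k-1} - S_k = \norm{x^k - x^{k-1}}$ and $\alpha := (1-\theta)/\theta$, substitution into the recursion above produces the single master inequality $S_k \le d_k + C_1 d_k^{\alpha}$, valid for all large $k$, with a constant $C_1 > 0$ depending only on $c , \theta , \rho_1 , \rho_2$.

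The three regimes then fall out from the sign of $\alpha - 1$. For $\theta = 0$ I would argue directly: here $\varphi' \equiv c$ is constant, so the KL inequality reads $c\, \dist(0 , \partial F(x^k)) \ge 1$ whenever $F(x^k) > F(\overline{x})$, while (C2) forces $\dist(0 , \partial F(x^k)) \to 0$; hence for large $k$ one must have $F(x^k) = F(\overline{x})$, and condition (C1) then yields $x^{k+1} = x^k$, i.e.\ finite termination, proving (i). For $\theta \in (0 , 1/2]$ one has $\alpha \ge 1$, so since $d_k \to 0$ eventually $d_k^{\alpha} \le d_k$, and the master inequality collapses to $S_k \le (1 + C_1) d_k = (1 + C_1)(S_{k-1} - S_k)$; rearranging gives geometric decay $S_k \le q\, S_{k-1}$ with $q = (1 + C_1)/(2 + C_1) < 1$, whence $\norm{x^k - \overline{x}} \le S_k \le \omega\, \tau^k$, proving (ii).

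The genuinely delicate case is $\theta \in (1/2 , 1)$, where $\alpha < 1$. Now $d_k^{\alpha}$ dominates for small $d_k$, so the master inequality reduces to $S_k \le C_2 (S_{k-1} - S_k)^{\alpha}$, equivalently $S_k^{1/\alpha} \le C_3 (S_{k-1} - S_k)$ with exponent $1/\alpha = \theta/(1-\theta) > 1$. The remaining work --- and the main obstacle --- is the purely real-analytic lemma that a nonnegative sequence $S_k \downarrow 0$ obeying $S_k^{p} \le C (S_{k-1} - S_k)$ with $p > 1$ satisfies $S_k = O\!\left(k^{-1/(p-1)}\right)$; this is standard (see \cite{AB2009}) but requires a careful monotonicity/integral-comparison argument on the discrete recursion. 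Applying it with $p = \theta/(1-\theta)$ gives $1/(p-1) = (1-\theta)/(2\theta - 1)$, so $\norm{x^k - \overline{x}} \le S_k \le \omega\, k^{-(1-\theta)/(2\theta - 1)}$, which is exactly (iii).
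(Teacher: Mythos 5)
Your proposal is correct and takes the same route the paper intends: the paper gives no proof of this theorem at all --- it simply records the result and cites \cite{AB2009} --- and your reconstruction is precisely that classical Attouch--Bolte argument, reusing the machinery of Theorem \ref{T:AbstrGlob}; the telescoped estimate $2S_k \le S_{k-1} + \rho\,\varphi(\Delta_k)$, the KL/(C2) bound $\Delta_k^{\theta} \le c(1-\theta)\rho_2\,(S_{k-1}-S_k)$, the master inequality, and the three regimes (including the finite-termination argument for $\theta=0$ and the exponent arithmetic $1/(p-1)=(1-\theta)/(2\theta-1)$ in case (iii)) all check out. Deferring the one remaining ingredient --- the discrete-recursion lemma that $S_k^{p} \le C(S_{k-1}-S_k)$ with $p>1$ forces $S_k = O\left(k^{-1/(p-1)}\right)$ --- to \cite{AB2009} is consistent with, indeed more detailed than, the paper's own treatment, so there is no gap to flag.
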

		
	\bibliographystyle{abbrv}
	\bibliography{refs-sp}

\end{document}